\numberwithin{equation}{section}
\newtheorem{Th}{Theorem}[section]
\newtheorem{Rem}[Th]{Remark}
\newtheorem{Lemma}[Th]{Lemma}
\newtheorem{Def}[Th]{Definition}
\newtheorem{Prop}[Th]{Proposition}
\newtheorem{Cor}[Th]{Corollary}
\renewcommand{\section}%
   {\setcounter{equation}{0}\@startsection {section}{1}{\z@}{-3.5ex plus -1ex
  minus -.2ex}{2.3ex plus .2ex}{\Large\bf}}
\def\WF{\mathop{\rm WF}\nolimits}
\def\supp{\mathop{\rm supp}\nolimits}
\def\ds{\displaystyle}
\def\R{\mathbb R}
\def\C{\mathbb C}
\def\N{\mathbb N}
\def\Z{\mathbb Z}
\newcommand{\F}{\mathcal{F}}
\newcommand{\Sch}{\mathcal{S}}
\newcommand{\calO}{\mathcal{O}}
\newcommand{\afrac}[2]{\genfrac{}{}{0pt}{1}{#1}{#2}}
\newcommand{\beqsn}{\arraycolsep1.5pt\begin{eqnarray*}}
\newcommand{\eeqsn}{\end{eqnarray*}\arraycolsep5pt}
\newcommand{\beqs}{\arraycolsep1.5pt\begin{eqnarray}}
\newcommand{\eeqs}{\end{eqnarray}\arraycolsep5pt}
\title{Stability of global wave front sets by perturbations of frames}
\author[Boiti]{Chiara Boiti}
\address{
Dipartimento di Matematica e Informatica \\Universit\`a di Ferrara\\
Via Ma\-chia\-vel\-li n.~30\\
I-44121 Ferrara\\
Italy}
\email{chiara.boiti@unife.it}
\author[Jornet]{David Jornet}
\address{
Instituto Universitario de Matem\'atica Pura y Aplicada IUMPA\\
Universitat Po\-li\-t\`ecni\-ca de Val\`encia\\
Camino de Vera, s/n\\
E-46022 Val\`encia\\
Spain}
\email{djornet@mat.upv.es}
\author[Oliaro]{Alessandro Oliaro}
\address{Dipartimento di Matematica\\ Universit\`a di Torino\\
 Via Carlo Alberto n.~10\\ I-10123 Torino\\ Italy}
 \email{alessandro.oliaro@unito.it}
\begin{document}

\keywords{global wave front sets, perturbations of frames, rapidly decreasing ultradifferentiable functions, time-frequency analysis}

\subjclass[2020]{Primary: 
35A18,
42C15,
42B10;
Secondary: 
46B15,
46F05.}

\begin{abstract}
In this paper we consider the Gabor wave front set of ultradistributions in the frame of ultradifferentiable functions. We prove that such a wave front set, defined through a Gabor frame on a regular lattice, is not affected by perturbations of the frame, in two different cases: when we consider $\varepsilon$-perturbations of Christensen type, and when we consider nonstationary Gabor frames.
\end{abstract}

\maketitle


\markboth{\sc  Stability of global wave front sets by perturbations of frames}{\sc C.~Boiti, D.~Jornet, A.~Oliaro}

\section{Introduction}
\label{sec1}

The wave front set is a very important concept in the study of local behavior of distributions, since it locates the singularities of the distribution together with the directions of the high frequencies that are responsible for those singularities. Such an analysis of the frequencies is done by looking at the decay in different cones on the Fourier transform side. In the context of Schwartz distributions, the wave front set was originally introduced by H\"ormander \cite{H5}. Since then, a huge literature has been produced on wave front sets and the corresponding applications to the study of propagation of singularities for linear partial differential operators in spaces of distributions and ultradistributions in a local sense, see for instance \cite{AJO1,AJO2,BJ-wfs,BJJ,FGJ3,Ro} and the references therein.

Several versions of the wave front set are defined and studied in the literature; here we focus in particular on this concept in global classes of functions and distributions. In the Schwartz class $\mathcal{S}$ and in the corresponding tempered distribution space $\mathcal{S}'$, for instance, the concept of singular support does not make sense, and the regularity is related with the behavior in the whole space $\mathbb{R}^d$. However, it is possible to define a global wave front set to analyze the micro-regularity of a distribution, and the basic idea is that here the cones should be taken in the whole of the phase space variables. Two types of global wave front set were already introduced by H\"ormander \cite{H3}, who studied the $C^\infty$ wave front set for tempered distributions, in the Beurling setting, and the analytic wave front set for ultradistributions of Gelfand-Shilov type, in the Roumieu setting, with the aim of analyzing quadratic hyperbolic operators. These global versions of the wave front set have been almost ignored in the literature for several years, and they have received more attention in the last years, starting from the paper \cite{RW}; in the latter work, indeed, the global wave front set is studied in relation with more recent tools from time-frequency analysis, like the Gabor transform and the Gabor frames. Such tools have been introduced as a way to analyze signals from the point of view of the joint energy distribution with respect to time and frequency, and they have found applications in the analysis of partial differential equations and pseudo differential operators, see for instance \cite{BJO-Regularity,CR1,G,MO,NR} and the references therein. This makes the Gabor transform to be a good tool also for the global wave front set, since it permits in a natural way the analysis in cones in the whole of phase space variables. In fact, \cite{RW} prove that the H\"ormander $C^\infty$ global wave front set can be equivalently defined by means of the Gabor transform, as well as by means of Gabor frames (which is called ``Gabor wave front set''). We refer also to \cite{N} and \cite{SW2}, where the homogeneous wave front set is introduced, and is shown to coincide with the Gabor wave front set. The global wave front set in the frame of Gelfand-Shilov ultradistributions of Gevrey type is studied in \cite{CS}; moreover, other versions of global wave front set in the spirit of time-frequency analysis have been analyzed, see for instance \cite{RW1,W} for the anisotropic case, and \cite{As,CR2} for the use of Wigner type transforms in the definition of wave front set. We consider here the case of global (Gabor) wave front sets in the frame of ultradifferentiable classes of Beurling type $\mathcal{S}_\omega(\R^d)$, where $\omega$ is a weight function in the sense of Braun, Meise and Taylor \cite{BMT}. 

The space $\mathcal{S}_\omega(\R^d)$ was originally introduced by Bj\"orck \cite{Bjorck} and is defined as the set of all functions belonging to $L^1(\R^d)$ such that (its Fourier transform is in $L^1(\R^d)$ and) for every $\lambda>0$ and $\alpha\in\N^d_0$,
$$
\|e^{\lambda\omega}\partial^\alpha u\|_\infty<+\infty,\qquad \|e^{\lambda\omega}\partial^\alpha \hat{u}\|_\infty<+\infty,
$$
where $\N_0:=\N\cup\{ 0\}$. This space is invariant under Fourier transform and coincides with the classical Schwartz space $\mathcal{S}(\R^d)$ when $\omega(t)=\log (1+t)$ for $t\geq 0$. Ultradifferentiable classes constitute then a large scale of spaces that are suitable both for microlocal analysis, pseudodifferential operators (see for instance \cite{A-Q,AJ,ABJO-global}, or \cite{PP,Pr} for the case of spaces defined by sequences), and time-frequency analysis (see for instance \cite{GZ}, or \cite{ABJO-cptWeyl,boiti2020nuclearity}). The space $\mathcal{S}_\omega(\R^d)$ coincides with the Gelfand-Shilov space of Beurling type and order $s>1$ when $\omega(t)=t^{1/s}$ (Gevrey weight) and we consider in this paper the non quasianalytic case, so that $\mathcal{S}_\omega(\R^d)$ contains non trivial compactly supported functions. The ultradifferentiable version of the Gabor wave front set considered in \cite{CS,H3,RW} is introduced in the setting of Beurling ultradistributions in $\mathcal{S}_\omega'(\R^d)$ in \cite{BJO-Gabor}, where it is shown that it can be equivalently described by means of the Gabor transform and of Gabor frames for subadditive weights, and it is applied to the study of global regularity of (pseudo)differential operators of infinite order. Moreover, taking advantage to the study of global pseudodifferential operators and existence of parametrices for different quantizations in \cite{A-Q,AJ}, in \cite{ABJO-global} the Gabor ultradifferentiable wave front set is analyzed in connection with pseudodifferential calculus, showing that it can be equivalently defined in terms of Weyl quantizations when the weight is smaller than some Gevrey weight. In \cite{ABJO-global}, moreover, applications to the study of propagation of singularities for Weyl quantizations with respect to  Weyl wave front sets are given in the general setting of \cite{AJ}.

In this paper we continue the study of global wave front sets in the ultradifferentiable setting, focusing in particular on its definition related with Gabor frames, cf. Definition \ref{def33Gabor} below. Recall that a Gabor frame is the set of translations and modulations of a single function, where the translations and modulations are taken in a lattice. A natural question is whether the request that the frame is of this type is essential or can be relaxed. The purpose of this paper is to show that frames that are, in some sense, not too ``far away'' from Gabor type, give equivalent definitions of the global (Gabor) wave front set. This is done, in this paper, in two different cases. First of all we study the case of frames that are \emph{perturbations} of a Gabor frame in the sense of \cite{C}. This means that we consider frames that do not have a Gabor structure, but for which each element has sufficiently small distance from a corresponding element of a Gabor frame; in this case, the Gabor wave front set does not depend on the perturbation. Secondly, we consider the case of \emph{nonstationary} Gabor frames in the sense of \cite{Balazs}, where more freedom is admitted in the choice of the lattice, and we may have less regular structure with respect to classical Gabor frames. Also in this case, we show that the wave front set is not affected by the loss of structure of the frame.

A general question could be to characterize the frames producing a wave front set that coincides with the Gabor wave front set defined by the Gabor transform, or equivalently by classical Gabor frames. This seems a difficult problem, and remains at the moment an open question.

The structure of the paper is the following. In Section \ref{sec2} we collect the main definitions and the basic results that are needed in the following. Section \ref{sec3} is devoted to the case of perturbations of frames in the sense of \cite{C}, and in Section \ref{sec4} nonstationary Gabor frames are analyzed.

\section{Notation and preliminary results}
\label{sec2}

For a function $f\in L^1(\R^d)$, the {\em Fourier transform} of $f$ is defined by
\beqsn
\F(f)(\xi)=\hat{f}(\xi)=\int_{\R^d} f(x)e^{-ix\cdot \xi}dx,
\eeqsn
with standard extensions to more general spaces of functions or distributions.

We denote by $T_x$, $M_\xi$ and $\Pi(z)$ the {\em translation}, {\em modulation} and {\em phase-space shift} operators defined by
\beqsn
&&T_xf(t)=f(t-x), \quad M_\xi f(t)=e^{it\cdot\xi}f(t),\\
&&\Pi(z)f(t)=M_\xi T_xf(t)=e^{it\cdot\xi}f(t-x),
\eeqsn
for $t,x,\xi\in\R^d$, $z=(x,\xi)$.

For a window function $\varphi\in L^2(\R^d)\setminus\{0\}$, the {\em short-time Fourier
transform} of $f\in L^2(\R^d)$ is defined by
\beqsn
V_\varphi f(z)=\langle f,\Pi(z)\varphi\rangle
=\int_{\R^d}f(t)\bar{\varphi}(t-x)e^{-it\cdot\xi}dt,
\qquad z=(x,\xi)\in\R^{2d},
\eeqsn
where $\langle\cdot,\cdot\rangle$ is the $L^2$-inner product,
with standard extensions to more general spaces of functions or distributions
(with the duality conjugate linear product).

We shall work here in the classes of rapidly decreasing ultradifferentiable functions for a weight function $\omega$ as in the following:

\begin{Def}
\label{def21Gabor}
A {\em non-quasianalytic subadditive weight function} is a continuous increasing function
$\omega: [0,+\infty)\to[0,+\infty)$ satisfying
\begin{itemize}
\item[$(\alpha)$]
$\omega(t_1+t_2)\leq\omega(t_1)+\omega(t_2),\quad
\forall t_1,t_2\geq0$;
\item[$(\beta)$]
$\ds \int_1^{+\infty}\frac{\omega(t)}{t^2}dt<+\infty$;
\item[$(\gamma)$]
$\exists a\in\R$, $b>0$ $s.t.$ $\omega(t)\geq a+b\log(1+t),\quad\forall t\geq0$;
\item[$(\delta)$]
$\varphi(t):=\omega(e^t)$ is convex.
\end{itemize}
We then define $\omega(\zeta):=\omega(|\zeta|)$ for $\zeta\in \C^d$, where $|\cdot|$ is the
Euclidean norm in $\C^d$.
\end{Def}


The {\em Young conjugate} $\varphi^*$ of $\varphi$ is then defined by
\beqsn
\varphi^*(s):=\sup_{t\geq0}(st-\varphi(t)),\qquad s\geq0.
\eeqsn

\begin{Def}
\label{defSomega}
We define $\Sch_\omega(\R^d)$ as the set of all $f\in\Sch(\R)$ such that,
for all $\lambda>0,\alpha\in\N_0^d$ (here $\N_0=\N\cup\{0\}$):
\beqsn
\sup_{x\in\R^d}|D^\alpha f(x)|e^{\lambda\omega(x)}<+\infty,
\quad \sup_{\xi\in\R^d}|D^\alpha \hat f(\xi)|e^{\lambda\omega(\xi)}<+\infty.
\eeqsn
We denote by $\Sch'_\omega(\R^d)$ the strong dual space of $\Sch_\omega(\R^d)$.
\end{Def}

In \cite[Thms.~2.4, 2.5]{BJO-realPW} we provide the space $\Sch_\omega(\R^d)$ with
different equivalent systems of seminorms such as, for example:
\beqsn
&&p_{\lambda,\mu}(f)=\sup_{\alpha,\beta\in\N_0^d}\sup_{x\in\R^d}|x^\beta D^\alpha f(x)|
e^{-\lambda\varphi^*\left(\frac{|\alpha|}{\lambda}\right)-\mu\varphi^*\left(\frac{|\beta|}{\mu}\right)}\\
&&q_{\lambda,\mu}(f)=\sup_{\alpha\in\N_0^d}\sup_{x\in\R^d}|D^\alpha f(x)|
e^{-\lambda\varphi^*\left(\frac{|\alpha|}{\lambda}\right)+\mu\omega(x)}\\
&&r_\lambda(f)=\sup_{z\in\R^{2d}}|V_\psi f(z)|e^{\lambda\omega(z)}
\eeqsn
for a window function $\psi\in\Sch_\omega(\R^d)\setminus\{0\}$.

Let $\varphi\in\Sch_\omega(\R^d)\setminus\{0\}$ and $\Lambda=\alpha\Z^d\times\beta\Z^d$
a lattice with $\alpha,\beta>0$ sufficiently small so that
$\{\Pi(\sigma)\varphi\}_{\sigma\in\Lambda}$ is a Gabor frame in $L^2(\R^d)$, i.e.
the sequence $\{x_\sigma\}_{\sigma\in\Lambda}=\{\Pi(\sigma)\varphi\}_{\sigma\in\Lambda}$ is a frame in the Hilbert space $L^2(\R^d)$ for some {\em lower} and {\em upper frame bounds} $A,B>0$:
\beqs
\label{2}
A\|f\|^2\leq\sum_{\sigma\in\Lambda}|\langle f,\Pi(\sigma)\varphi\rangle|^2
\leq B\|f\|^2,
\qquad\forall f\in L^2(\R^d),
\eeqs
where $\|\cdot\|$ denotes the $L^2$-norm.
Recall that if the second inequality of \eqref{2} is satisfied, then 
$\{\Pi(\sigma)\varphi\}_{\sigma\in\Lambda}$ is said to be a {\em Bessel sequence}.

Note that $\langle f,\Pi(\sigma)\varphi\rangle=V_\varphi f(\sigma)$. In \cite{BJO-Gabor}
we defined the following global wave front set with respect to the short-time Fourier transform:
\begin{Def}
\label{def33Gabor}
If $u\in \Sch'_\omega(\R^d)$, we say that $z_0\in\R^{2d}\setminus\{0\}$ is not in the
{\em Gabor $\omega$-wave front set} $\WF^G_\omega(u)$ of $u$ if there exists an open conic
set $\Gamma\subseteq\R^{2d}\setminus\{0\}$ containing $z_0$ such that
\beqs
\label{39Gabor}
\sup_{\sigma\in\Lambda\cap\Gamma}e^{\lambda\omega(\sigma)}
|\langle u,\Pi(\sigma)\varphi\rangle|<+\infty,
\qquad\forall\lambda>0.
\eeqs
\end{Def}

We recall that $\WF^G_\omega(u)$ does not depend on the choice of the window function $\varphi$ by \cite[Prop.~3.2]{BJO-Gabor}.
It will be useful, in the sequel,  to write condition \eqref{39Gabor} in a different equivalent form:
\begin{Lemma}
\label{lemma1}
Condition \eqref{39Gabor} is equivalent to
\beqs
\label{1}
\sum_{\sigma\in\Lambda\cap\Gamma}e^{\lambda\omega(\sigma)}
|\langle u,\Pi(\sigma)\varphi\rangle|^2<+\infty,
\qquad\forall\lambda>0.
\eeqs
\end{Lemma}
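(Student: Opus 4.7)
The plan is to prove both implications directly, exploiting condition $(\gamma)$ of Definition~\ref{def21Gabor}, which ensures that $\omega$ grows at least logarithmically and hence that $e^{-\nu\omega(\sigma)}$ is summable over the lattice $\Lambda$ for $\nu$ large enough.

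For the implication \eqref{39Gabor}$\Rightarrow$\eqref{1}, I fix $\lambda>0$ and apply \eqref{39Gabor} with a parameter $\mu>0$ to be chosen later, obtaining a constant $C_\mu>0$ with
\beqsn
|\langle u,\Pi(\sigma)\varphi\rangle|\leq C_\mu e^{-\mu\omega(\sigma)},\qquad \sigma\in\Lambda\cap\Gamma.
\eeqsn
Squaring and multiplying by $e^{\lambda\omega(\sigma)}$ yields
\beqsn
e^{\lambda\omega(\sigma)}|\langle u,\Pi(\sigma)\varphi\rangle|^2\leq C_\mu^2\, e^{-(2\mu-\lambda)\omega(\sigma)}.
\eeqsn
By condition $(\gamma)$, for $\mu$ sufficiently large we have $e^{-(2\mu-\lambda)\omega(\sigma)}\leq C\,(1+|\sigma|)^{-(2\mu-\lambda)b}$, which is summable on the lattice $\Lambda\subset\R^{2d}$ as soon as $(2\mu-\lambda)b>2d$. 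Choosing $\mu$ accordingly gives \eqref{1} for the prescribed $\lambda$.

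The reverse implication \eqref{1}$\Rightarrow$\eqref{39Gabor} is essentially the trivial ``summability implies boundedness of the general term''. Given $\lambda>0$, I apply \eqref{1} with $2\lambda$: the convergence of $\sum_{\sigma\in\Lambda\cap\Gamma}e^{2\lambda\omega(\sigma)}|\langle u,\Pi(\sigma)\varphi\rangle|^2$ forces each term to be bounded by some constant $C$, so
\beqsn
e^{\lambda\omega(\sigma)}|\langle u,\Pi(\sigma)\varphi\rangle|\leq C^{1/2},\qquad \sigma\in\Lambda\cap\Gamma,
\eeqsn
which is exactly \eqref{39Gabor}.

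There is no real obstacle here: the only slightly delicate point is the first direction, where one must absorb the factor $e^{\lambda\omega(\sigma)}$ into the decay provided by \eqref{39Gabor} and then use property $(\gamma)$ to ensure summability of the residual weight over $\Lambda$. Everything else is bookkeeping with the parameters $\lambda$ and $\mu$.
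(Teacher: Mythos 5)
Your proof is correct and follows essentially the same route as the paper: both directions rest on using the sup bound at a higher parameter to absorb the weight $e^{\lambda\omega(\sigma)}$ and then invoking condition $(\gamma)$ for summability of $e^{-\nu\omega(\sigma)}$ over the lattice, and the converse is the same ``terms of a convergent series are bounded'' observation. The only cosmetic difference is that you choose the auxiliary parameter $\mu$ large from the start, whereas the paper uses the hypothesis at level $\lambda$ and handles small $\lambda$ by monotonicity; both are fine.
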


\begin{proof}
Assume that
\beqsn
\forall\lambda>0\ \exists C_\lambda>0:\ \quad
\sup_{\sigma\in\Lambda\cap\Gamma}e^{\lambda\omega(\sigma)}
|\langle u,\Pi(\sigma)\varphi\rangle|\leq C_\lambda
\eeqsn
and prove \eqref{1}. Indeed,
\beqsn
\sum_{\sigma\in\Lambda\cap\Gamma}e^{\lambda\omega(\sigma)}
|\langle u,\Pi(\sigma)\varphi\rangle|^2
=\sum_{\sigma\in\Lambda\cap\Gamma}e^{-\lambda\omega(\sigma)}
e^{2\lambda\omega(\sigma)}
|\langle u,\Pi(\sigma)\varphi\rangle|^2
\leq C_{\lambda}^2\sum_{\sigma\in\Lambda\cap\Gamma}e^{-\lambda\omega(\sigma)}
\leq C'_\lambda
\eeqsn
for some $C'_\lambda>0$, since the last series converges for $\lambda\geq\lambda_0$
sufficiently large by condition $(\gamma)$ on the weight $\omega$. If $\lambda<\lambda_0$
we increase 
$e^{\lambda\omega(\sigma)}|\langle u,\Pi(\sigma)\varphi\rangle|^2$ by $e^{\lambda_0\omega(\sigma)}
|\langle u,\Pi(\sigma)\varphi\rangle|^2$. Hence \eqref{1} is proved.

Let us now prove the opposite implication. Assume that
\beqsn
\forall\lambda>0\ \exists C_\lambda>0:\ \quad
\sum_{\sigma\in\Lambda\cap\Gamma}e^{\lambda\omega(\sigma)}
|\langle u,\Pi(\sigma)\varphi\rangle|^2\leq C_\lambda
\eeqsn
and prove \eqref{39Gabor}.
Indeed,
\beqsn
e^{2\lambda\omega(\sigma)}
|\langle u,\Pi(\sigma)\varphi\rangle|^2
\leq \sum_{\sigma\in\Lambda\cap\Gamma}e^{2\lambda\omega(\sigma)}
|\langle u,\Pi(\sigma)\varphi\rangle|^2\leq C_{2\lambda},
\qquad\forall\sigma\in\Lambda\cap\Gamma,
\eeqsn
implies
\beqsn
\sup_{\sigma\in\Lambda\cap\Gamma}e^{2\lambda\omega(\sigma)}
|\langle u,\Pi(\sigma)\varphi\rangle|^2\leq C_{2\lambda},
\eeqsn
which is equivalent to
\beqsn
\left(\sup_{\sigma\in\Lambda\cap\Gamma}e^{\lambda\omega(\sigma)}
|\langle u,\Pi(\sigma)\varphi\rangle|\right)^2=
\sup_{\sigma\in\Lambda\cap\Gamma}\left(e^{\lambda\omega(\sigma)}
|\langle u,\Pi(\sigma)\varphi\rangle|\right)^2\leq C_{2\lambda}.
\eeqsn
Therefore \eqref{39Gabor} is proved.
\end{proof}

In the following we need some results on modulation spaces, in particular in the case of exponential weights. We recall here the main definitions and results, referring to \cite{G} for the classical theory and to \cite{BJO-Gabor} for the ultradifferentiable setting.

\begin{Def}
Let $\omega$ be a weight as in Definition \ref{def21Gabor}, and $m_\mu(z):=e^{\mu\omega(z)}$ for $\mu\in\R$ and $z\in\R^{2d}$. We fix a window $\varphi\in\mathcal{S}_\omega(\R^d)\setminus\{0\}$. The modulation space $M^{p,q}_{m_\mu}(\R^d)$, $1\leq p,q\leq +\infty$, is defined as
\beqsn
M^{p,q}_{m_\mu}(\R^d) := \{f\in\mathcal{S}'_\omega(\R^d) : V_\varphi f\in L^{p,q}_{m_\mu}(\R^{2d})\},
\eeqsn
where $L^{p,q}_{m_\mu}(\R^{2d})$ is the usual mixed norm weighted Lebesgue space, defined by
\beqsn
\| F\|_{L^{p,q}_{m_\mu}} :=\left(\int_{\R^d}\left(\int_{\R^d} |F(x,\xi)|^p m_\mu(x,\xi)^p dx\right)^{q/p} d\xi\right)^{1/q} <+\infty
\eeqsn
when $p,q<\infty$, with standard meaning when one of them is $\infty$. We write $M^p_{m_\mu}(\R^d):= M^{p,p}_{m_\mu}(\R^d)$.
\end{Def}
The modulation spaces $M^{p,q}_{m_\mu}(\R^d)$ are Banach spaces with norm $\| f\|_{M^{p,q}_{m_\mu}} :=\| V_\varphi f\|_{L^{p,q}_{m_\mu}}$, and are independent of the window $\varphi$, in the sense that different windows give equivalent norms. Moreover, for $1\leq p,q<\infty$ we have that
\beqs
\label{add-dual-modul}
(M^{p,q}_{m_\mu})^* = M^{p',q'}_{m_{-\mu}},
\eeqs
where $p', q'$ are the conjugate exponents of $p,q$, and
\beqs
\label{S-Sprime-mod}
\mathcal{S}_\omega(\R^d)=\bigcap_{\mu>0} M^{p,q}_{m_\mu}(\R^d)\quad \text{and}\quad \mathcal{S}'_\omega(\R^d)=\bigcup_{\mu<0} M^{p,q}_{m_\mu}(\R^d),
\eeqs
for every $1\leq p,q\leq\infty$ (see also \cite{ABJO-cptWeyl}).

Our aim is to investigate the stability of the Gabor $\omega$-wave front set $\WF^G_\omega(u)$ under perturbations of the Gabor frame $\{x_\sigma\}_{\sigma\in\Lambda}=
\{\Pi(\sigma)\varphi\}_{\sigma\in\Lambda}$ in $L^2(\R^d)$.

\section{Wave front set and $\varepsilon$-perturbations of frames}
\label{sec3}

Given the Hilbert space $L^2(\R^d)$ and a countable set of multi-indices $\Lambda$, we
resume from \cite{C} the notion of $\varepsilon$-perturbation of a frame:

\begin{Def}
\label{defpert}
A family $\{y_\sigma\}_{\sigma\in\Lambda}$ in $L^2(\R^d)$ is an
{\em $\varepsilon$-perturbation} of the frame $\{x_\sigma\}_{\sigma\in\Lambda}$ if
\beqsn
\sum_{\sigma\in\Lambda}\|x_\sigma-y_\sigma\|^2\leq\varepsilon,
\eeqsn
where $\|\cdot\|$ is the norm in $L^2(\R^d)$.
\end{Def}

The following result is essentially \cite[Thm.~1]{C}:

\begin{Lemma}
\label{rem1}
Suppose that $\{x_\sigma\}_{\sigma\in\Lambda}$ is a frame with lower frame bound $A$ and 
upper frame bound $B$, and let $\{y_\sigma\}_{\sigma\in\Lambda}$ be an $\varepsilon$-perturbation
 of $\{x_\sigma\}_{\sigma\in\Lambda}$ . Then:
\begin{itemize}
\item[(i)] For every $\varepsilon>0$,  $\{y_\sigma\}_{\sigma\in\Lambda}$  is a Bessel sequence.
\item[(ii)] If $0<\varepsilon<A$,  then also 
$\{y_\sigma\}_{\sigma\in\Lambda}$ is a frame in $L^2(\R^d)$ with lower frame bound
$A(1-\sqrt{\varepsilon/A})^2$ and upper frame bound
$B(1+\sqrt{\varepsilon/B})^2$.
\end{itemize}
\end{Lemma}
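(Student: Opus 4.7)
The plan is to reduce everything to two applications of the triangle inequality in $\ell^2(\Lambda)$, combined with Cauchy--Schwarz for the perturbation term.

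First I would fix $f\in L^2(\R^d)$ and write $\langle f,y_\sigma\rangle=\langle f,x_\sigma\rangle+\langle f,y_\sigma-x_\sigma\rangle$. Viewing both sides as sequences in $\sigma\in\Lambda$ and using the triangle inequality of the $\ell^2$-norm, I get
\beqsn
\Bigl(\sum_{\sigma\in\Lambda}|\langle f,y_\sigma\rangle|^2\Bigr)^{1/2}
\leq
\Bigl(\sum_{\sigma\in\Lambda}|\langle f,x_\sigma\rangle|^2\Bigr)^{1/2}
+\Bigl(\sum_{\sigma\in\Lambda}|\langle f,y_\sigma-x_\sigma\rangle|^2\Bigr)^{1/2}.
\eeqsn
The first summand is bounded by $\sqrt{B}\,\|f\|$ by the upper frame bound. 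For the second one I apply Cauchy--Schwarz pointwise, $|\langle f,y_\sigma-x_\sigma\rangle|\leq\|f\|\cdot\|y_\sigma-x_\sigma\|$, and then use the defining inequality of $\varepsilon$-perturbation from Definition \ref{defpert}, obtaining
\beqsn
\sum_{\sigma\in\Lambda}|\langle f,y_\sigma-x_\sigma\rangle|^2
\leq\|f\|^2\sum_{\sigma\in\Lambda}\|y_\sigma-x_\sigma\|^2\leq\varepsilon\|f\|^2.
\eeqsn
Combining gives $\sum_\sigma|\langle f,y_\sigma\rangle|^2\leq(\sqrt{B}+\sqrt{\varepsilon})^2\|f\|^2=B(1+\sqrt{\varepsilon/B})^2\|f\|^2$, which proves (i) and also yields the claimed upper frame bound in (ii).

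For the lower bound in (ii) I would argue symmetrically, using the reverse triangle inequality in $\ell^2(\Lambda)$:
\beqsn
\Bigl(\sum_{\sigma\in\Lambda}|\langle f,y_\sigma\rangle|^2\Bigr)^{1/2}
\geq
\Bigl(\sum_{\sigma\in\Lambda}|\langle f,x_\sigma\rangle|^2\Bigr)^{1/2}
-\Bigl(\sum_{\sigma\in\Lambda}|\langle f,y_\sigma-x_\sigma\rangle|^2\Bigr)^{1/2}
\geq(\sqrt{A}-\sqrt{\varepsilon})\|f\|.
\eeqsn
The hypothesis $\varepsilon<A$ is exactly what is needed to make this right-hand side positive; squaring gives the lower frame bound $A(1-\sqrt{\varepsilon/A})^2$.

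There is no real obstacle: the only subtle point is the passage from the pointwise estimate to the $\ell^2$-estimate on $\langle f,y_\sigma-x_\sigma\rangle$, which forces the use of $\varepsilon$ in its squared form $\sum_\sigma\|y_\sigma-x_\sigma\|^2\leq\varepsilon$ rather than in a pointwise bound. After that, everything is a one-line triangle-inequality argument, and the sharp constants $(1\pm\sqrt{\varepsilon/A})$, $(1+\sqrt{\varepsilon/B})$ in the statement pop out automatically.
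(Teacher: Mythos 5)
Your proof is correct, and it is more self-contained than the paper's. The paper handles the two points differently: for (ii) it simply cites Christensen's perturbation theorem \cite[Thm.~1]{C} without reproving it, and for (i) it uses the crude inequality $|a+b|^2\leq 2(|a|^2+|b|^2)$ together with the same Cauchy--Schwarz estimate $\sum_{\sigma}|\langle f,y_\sigma-x_\sigma\rangle|^2\leq\varepsilon\|f\|^2$, arriving at the Bessel bound $2(B+\varepsilon)$ rather than your sharper $B(1+\sqrt{\varepsilon/B})^2$. Your route --- Minkowski's inequality in $\ell^2(\Lambda)$ applied to the decomposition $\langle f,y_\sigma\rangle=\langle f,x_\sigma\rangle+\langle f,y_\sigma-x_\sigma\rangle$, plus its reverse form for the lower bound --- proves (i) and (ii) in one stroke and recovers exactly the constants $A(1-\sqrt{\varepsilon/A})^2$ and $B(1+\sqrt{\varepsilon/B})^2$ stated in the lemma; it is essentially the standard proof of Christensen's theorem, here written out rather than quoted. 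The only thing worth making explicit is that in the lower-bound step the hypothesis $\varepsilon<A$ guarantees $\sqrt{A}-\sqrt{\varepsilon}>0$, so that squaring the inequality is legitimate --- which you do note. What the paper's version buys is brevity (one reference plus a two-line Bessel estimate); what yours buys is a complete argument with the sharp constants and no external dependence.
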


\begin{proof}
Point {\rm (ii)} is exactly \cite[Thm.~1]{C}. Point {\rm (i)} does not require conditions on $\varepsilon$; indeed, for every $f\in L^2(\R^d)$, we have
\beqsn
\sum_{\sigma\in\Lambda} |\langle f,y_\sigma\rangle|^2 &&\leq 2\left( \sum_{\sigma\in\Lambda} |\langle f,y_\sigma-x_\sigma\rangle|^2 + \sum_{\sigma\in\Lambda} |\langle f,x_\sigma\rangle|^2\right) \\
&&\leq 2\left( \| f\|^2 \sum_{\sigma\in\Lambda} \| y_\sigma-x_\sigma\|^2 + B\| f\|^2\right) \\
&&= 2(B+\varepsilon)\| f\|^2.
\eeqsn
\end{proof}

From now on we fix $\varphi\in\mathcal{S}_\omega(\R^d)\setminus\{0\}$ and a lattice 
$\Lambda=\alpha\mathbb{Z}^d\times\beta\mathbb{Z}^d$ such that 
$\{x_\sigma\}_{\sigma\in\Lambda} = \{\Pi(\sigma)\varphi\}_{\sigma\in\Lambda}$ is a Gabor 
frame in $L^2(\R^d)$.

Inspired by \cite{AFGLS}, where stability of phase retrieval by $\varepsilon$-perturbation of
frames is studied, we shall study  here the stability of $\WF^G_\omega(u)$ when
the Gabor frame $\{x_\sigma\}_{\sigma\in\Lambda}$ is replaced by an $\varepsilon$-perturbation 
$\{y_\sigma\}_{\sigma\in\Lambda}$
in \eqref{39Gabor}. 

We start by the following definition:

\begin{Def}
\label{y-sigmaWF}
If $u\in\Sch'_\omega(\R^d)$ and $\{y_\sigma\}_{\sigma\in\Lambda}\subset \mathcal{S}_\omega(\R^d)$,
we say that $z_0\in\R^{2d}\setminus\{0\}$ is not in the 
$\{y_\sigma\}_{\sigma\in\Lambda}$-wave front set
$\WF^{\{y_\sigma\}}_\omega(u)$ of $u$ if there exists an open conic set $\Gamma
\subseteq\R^{2d}\setminus\{0\}$ containing $z_0$ such that
\beqs
\label{3}
\sup_{\sigma\in\Lambda\cap\Gamma}e^{\lambda\omega(\sigma)}
|\langle u,y_\sigma\rangle|<+\infty,
\qquad\forall\lambda>0.
\eeqs
\end{Def}

The notation $\WF^{\{y_\sigma\}}_\omega(u)$ denotes that the 
$\{y_\sigma\}_{\sigma\in\Lambda}$-wave front set depends not only on the weight $\omega$ 
but also on the sequence $\{y_\sigma\}_{\sigma\in\Lambda}$.

\begin{Rem}\label{wave-dual}
In Definition \ref{y-sigmaWF} we supposed that $u$ is a distribution and $y_\sigma$ are in the corresponding test function space; on the other hand, this definition makes sense each time the expression $\langle u,y_\sigma\rangle$ is well defined, for instance when $u$ belongs to a modulation space and $y_\sigma$ are in the corresponding dual space.
\end{Rem}

Similarly as in Lemma~\ref{lemma1}, condition \eqref{3} is equivalent to
\beqs
\label{4}
\sum_{\sigma\in\Lambda\cap\Gamma}e^{\lambda\omega(\sigma)}
|\langle u,y_\sigma\rangle|^2<+\infty,
\qquad\forall\lambda>0.
\eeqs

\begin{Th}
\label{th1}
Let $\varphi\in\Sch_\omega(\R^d)\setminus\{0\}$ and $\Lambda=\alpha\Z^d\times\beta\Z^d$ a lattice with $\alpha,\beta>0$ sufficiently small so that 
$\{x_\sigma\}_{\sigma\in\Lambda}=
\{\Pi(\sigma)\varphi\}_{\sigma\in\Lambda}$ is a Gabor frame in $L^2(\R^d)$.
\begin{itemize}
\item[(i)]
Fix $\mu\geq 0$ and suppose that $\{y_\sigma\}_{\sigma\in\Lambda}\subset M^2_{m_\mu}(\R^d)$ satisfies
\beqs
\label{5}
\forall\lambda\geq0\  \exists \varepsilon_\lambda>0:
\quad \sum_{\sigma\in\Lambda}e^{\lambda\omega(\sigma)}\|x_\sigma-y_\sigma\|_{M^2_{m_\mu}}^2\leq\varepsilon_\lambda;
\eeqs
then, for every $u\in M^2_{m_{-\mu}}(\R^d)$ we have
\beqsn
\WF^G_\omega(u)=\WF^{\{y_\sigma\}}_\omega(u).
\eeqsn
\item[(ii)] suppose that $\{y_\sigma\}_{\sigma\in\Lambda}\subset \Sch_\omega(\R^d)$ satisfies
\beqsn
\forall\lambda,\mu\geq0\  \exists \varepsilon_{\lambda,\mu}>0:
\quad \sum_{\sigma\in\Lambda}e^{\lambda\omega(\sigma)}\|x_\sigma-y_\sigma\|_{M^2_{m_\mu}}^2\leq\varepsilon_{\lambda,\mu};
\eeqsn
then, for every $u\in \Sch_\omega'(\R^d)$ we have
\beqsn
\WF^G_\omega(u)=\WF^{\{y_\sigma\}}_\omega(u).
\eeqsn
\end{itemize}
\end{Th}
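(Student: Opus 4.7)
The overall strategy is to reduce everything to comparing $\langle u,x_\sigma\rangle$ and $\langle u,y_\sigma\rangle$ through the difference $\langle u,x_\sigma - y_\sigma\rangle$, and to estimate this difference via the duality $(M^2_{m_\mu})^* = M^2_{m_{-\mu}}$ from \eqref{add-dual-modul}. Since both $\WF^G_\omega(u)$ and $\WF^{\{y_\sigma\}}_\omega(u)$ can be characterized by square-summable conditions (Lemma~\ref{lemma1} for the first, and \eqref{4} for the second), the natural inequality to use is $|a|^2 \leq 2|b|^2 + 2|a-b|^2$.

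For part (i), I fix $u\in M^2_{m_{-\mu}}(\R^d)$ and an open conic set $\Gamma\subseteq\R^{2d}\setminus\{0\}$. Assume first $z_0 \notin \WF^G_\omega(u)$ with witness cone $\Gamma$, so by Lemma~\ref{lemma1} the series $\sum_{\sigma\in\Lambda\cap\Gamma} e^{\lambda\omega(\sigma)}|\langle u,x_\sigma\rangle|^2$ converges for every $\lambda>0$. By the duality pairing I get
\beqsn
|\langle u, x_\sigma - y_\sigma\rangle|^2 \leq \|u\|_{M^2_{m_{-\mu}}}^2 \, \|x_\sigma - y_\sigma\|_{M^2_{m_\mu}}^2,
\eeqsn
and therefore, using hypothesis \eqref{5},
\beqsn
\sum_{\sigma\in\Lambda\cap\Gamma} e^{\lambda\omega(\sigma)}|\langle u, x_\sigma - y_\sigma\rangle|^2
\leq \|u\|_{M^2_{m_{-\mu}}}^2\, \varepsilon_\lambda < +\infty.
\eeqsn
Combining with $|\langle u,y_\sigma\rangle|^2 \leq 2|\langle u,x_\sigma\rangle|^2 + 2|\langle u,x_\sigma - y_\sigma\rangle|^2$ and summing over $\sigma\in\Lambda\cap\Gamma$ yields $z_0\notin\WF^{\{y_\sigma\}}_\omega(u)$ through the series form \eqref{4}. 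The reverse inclusion is obtained by exactly the same argument with $x_\sigma$ and $y_\sigma$ interchanged, which is legitimate because \eqref{5} is symmetric in the two sequences and Lemma~\ref{rem1}\,(i) (or directly \eqref{5} itself) guarantees that $\{y_\sigma\}_{\sigma\in\Lambda}\subset M^2_{m_\mu}$, so the pairing $\langle u,y_\sigma\rangle$ is well defined in the sense of Remark~\ref{wave-dual}.

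Part (ii) will follow from (i) by choosing the weight exponent adapted to $u$. Given $u \in \mathcal{S}'_\omega(\R^d)$, by the characterization \eqref{S-Sprime-mod} there exists $\mu_0 > 0$ such that $u \in M^2_{m_{-\mu_0}}(\R^d)$; and since $\{y_\sigma\}\subset \mathcal{S}_\omega(\R^d)\subset M^2_{m_{\mu_0}}(\R^d)$ (again by \eqref{S-Sprime-mod}), the hypothesis of (ii) specialized to $\mu = \mu_0$ yields exactly the assumption of (i) for this value of $\mu$, and applying (i) gives the conclusion. The main technical point to keep in mind is the correct pairing: the duality bound I use only works because $u$ lies in the predual of the space containing $x_\sigma - y_\sigma$; no serious obstacle arises, as the argument is essentially a triangle-inequality/Cauchy--Schwarz estimate, and the weight-conjugate structure of modulation spaces is exactly tailored to make both parts of the bound finite for arbitrary $\lambda > 0$.
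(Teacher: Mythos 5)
Your proof is correct, and for part (i) it is leaner than the paper's. Both arguments rest on the same two pillars: the duality bound $|\langle u,x_\sigma-y_\sigma\rangle|\le\|u\|_{M^2_{m_{-\mu}}}\|x_\sigma-y_\sigma\|_{M^2_{m_\mu}}$ combined with hypothesis \eqref{5} (this is exactly the paper's estimate \eqref{add-01}), and the square-summable reformulations of the two wave front sets (Lemma~\ref{lemma1} and \eqref{4}). Where you diverge is in how the comparison is organized: you use the crude inequality $|a|^2\le 2|b|^2+2|a-b|^2$, which immediately gives $\sum_{\sigma\in\Lambda\cap\Gamma} e^{\lambda\omega(\sigma)}|\langle u,y_\sigma\rangle|^2\le 2\sum_{\sigma\in\Lambda\cap\Gamma} e^{\lambda\omega(\sigma)}|\langle u,x_\sigma\rangle|^2+2\varepsilon_\lambda\|u\|^2_{M^2_{m_{-\mu}}}$ and its symmetric counterpart; this suffices because only finiteness for every $\lambda>0$ is at stake. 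The paper instead expands $\bigl(|\langle u,x_\sigma\rangle|-|\langle u,x_\sigma-y_\sigma\rangle|\bigr)^2$ and applies Cauchy--Schwarz in $\ell^2$ to the resulting cross term, which obliges it to also control $\sum_{\sigma}e^{-2\mu\omega(\sigma)}|\langle u,x_\sigma\rangle|^2$ via the continuity of the coefficient (analysis) operator on $M^2_{m_{-\mu}}$, i.e.\ \eqref{add-02} and \cite[Thm.~3.13]{BJO-Gabor}. The payoff of that extra work is a sharper, purely additive comparison of the two weighted sums with coefficient $1$ in front of the leading term; your version trades that for a factor $2$ but dispenses with the frame-theoretic ingredient entirely. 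Part (ii) is handled identically in both. One cosmetic remark: your appeal to Lemma~\ref{rem1}\,(i) to justify $\{y_\sigma\}\subset M^2_{m_\mu}$ is unnecessary and not quite on point (that lemma concerns Bessel sequences in $L^2$); membership in $M^2_{m_\mu}$ is already part of the hypothesis of (i), and that alone makes $\langle u,y_\sigma\rangle$ well defined through \eqref{add-dual-modul} and Remark~\ref{wave-dual}.
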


\begin{proof}
{\rm (i)} We observe first that the wave front set is well defined in view of Remark \ref{wave-dual} and \eqref{add-dual-modul}. Let us prove that conditions \eqref{1} and \eqref{4} are equivalent, for any fixed open conic set
$\Gamma\subseteq\R^{2d}\setminus\{0\}$.

Indeed, for any $\lambda>0$
\beqs
\nonumber
&&
\sum_{\sigma\in\Lambda\cap\Gamma}e^{\lambda\omega(\sigma)}|\langle u,y_\sigma\rangle|^2-
\sum_{\sigma\in\Lambda\cap\Gamma}e^{\lambda\omega(\sigma)}|\langle u,x_\sigma\rangle|^2\\
\nonumber
=&&
\sum_{\sigma\in\Lambda\cap\Gamma}e^{\lambda\omega(\sigma)}|\langle u,x_\sigma-(x_\sigma-y_\sigma)\rangle|^2-
\sum_{\sigma\in\Lambda\cap\Gamma}e^{\lambda\omega(\sigma)}|\langle u,x_\sigma\rangle|^2\\
\nonumber
\geq&&\sum_{\sigma\in\Lambda\cap\Gamma}e^{\lambda\omega(\sigma)}
(|\langle u,x_\sigma\rangle|-|\langle u,x_\sigma-y_\sigma\rangle|)^2-
\sum_{\sigma\in\Lambda\cap\Gamma}e^{\lambda\omega(\sigma)}|\langle u,x_\sigma\rangle|^2\\
\nonumber
\geq&&-2\sum_{\sigma\in\Lambda\cap\Gamma}|\langle u,x_\sigma\rangle|
e^{\lambda\omega(\sigma)}|\langle u,x_\sigma-y_\sigma\rangle|+
\sum_{\sigma\in\Lambda\cap\Gamma}e^{\lambda\omega(\sigma)}|\langle u,x_\sigma-y_\sigma\rangle|^2\\
\nonumber
\geq&&-2\left(\sum_{\sigma\in\Lambda\cap\Gamma}e^{-2\mu\omega(\sigma)}|\langle u,x_\sigma\rangle|^2\right)^{1/2}
\cdot\left(\sum_{\sigma\in\Lambda\cap\Gamma}e^{2(\lambda+\mu)\omega(\sigma)}|\langle u,x_\sigma-y_\sigma\rangle|^2\right)^{1/2}\\
\label{6}
&&+\sum_{\sigma\in\Lambda\cap\Gamma}e^{\lambda\omega(\sigma)}|\langle u,x_\sigma-y_\sigma\rangle|^2
\eeqs
by the Cauchy-Schwarz inequality in $\ell^2(\R^d)$. Observe that for every $\nu\geq 0$ we have that the series 
$\sum_{\Lambda\cap\Gamma}e^{\nu\omega(\sigma)}|\langle u,x_\sigma-y_\sigma\rangle|^2$ is convergent since
\beqs
\label{add-01}
\sum_{\sigma\in\Lambda\cap\Gamma}e^{\nu\omega(\sigma)}|\langle u,x_\sigma-y_\sigma\rangle|^2\leq 
\sum_{\sigma\in\Lambda}e^{\nu\omega(\sigma)}\| u\|_{M^2_{m_{-\mu}}}^2 \| x_\sigma-y_\sigma\|_{M^2_{m_{\mu}}}^2 \leq \varepsilon_\nu \| u\|_{M^2_{m_{-\mu}}}^2
\eeqs
by condition \eqref{5}. Moreover, from \cite[Theorem 3.13]{BJO-Gabor}, we have continuity of the coefficient operator of the frame $\{x_\sigma\}_{\sigma\in\Lambda}$ on the modulation space $M^2_{m_{-\mu}}(\R^d)$, so there exists $C>0$ such that
\beqs
\label{add-02}
\left(\sum_{\sigma\in\Lambda\cap\Gamma}e^{-2\mu\omega(\sigma)}|\langle u,x_\sigma\rangle|^2\right)^{1/2} \leq
\left(\sum_{\sigma\in\Lambda}e^{-2\mu\omega(\sigma)}|\langle u,x_\sigma\rangle|^2\right)^{1/2}\leq C\| u\|_{M^2_{m_{-\mu}}}.
\eeqs
We can then further estimate, from \eqref{6}, \eqref{add-01} and \eqref{add-02}:
\beqsn
&&
\sum_{\sigma\in\Lambda\cap\Gamma}e^{\lambda\omega(\sigma)}|\langle u,y_\sigma\rangle|^2-
\sum_{\sigma\in\Lambda\cap\Gamma}e^{\lambda\omega(\sigma)}|\langle u,x_\sigma\rangle|^2\geq \\
&&\quad \geq-2C\| u\|_{M^2_{m_{-\mu}}}\sqrt{\varepsilon_{2(\lambda+\mu)}}\| u\|_{M^2_{m_{-\mu}}}
+\sum_{\sigma\in\Lambda\cap\Gamma}e^{\lambda\omega(\sigma)}|\langle u,x_\sigma-y_\sigma\rangle|^2.
\eeqsn
We have thus proved that
\beqsn
\sum_{\sigma\in\Lambda\cap\Gamma}e^{\lambda\omega(\sigma)}|\langle u,x_\sigma\rangle|^2
\leq&& \sum_{\sigma\in\Lambda\cap\Gamma}e^{\lambda\omega(\sigma)}|\langle u,y_\sigma\rangle|^2+
2C\sqrt{\varepsilon_{2(\lambda+\mu)}}\| u\|_{M^2_{m_{-\mu}}}^2\\
&&-
\sum_{\sigma\in\Lambda\cap\Gamma}e^{\lambda\omega(\sigma)}|\langle u,x_\sigma-y_\sigma\rangle|^2\\
\leq&&\sum_{\sigma\in\Lambda\cap\Gamma}e^{\lambda\omega(\sigma)}|\langle u,y_\sigma\rangle|^2+
2C\sqrt{\varepsilon_{2(\lambda+\mu)}}\| u\|_{M^2_{m_{-\mu}}}^2,
\eeqsn
so that \eqref{4} implies \eqref{1}.

For the opposite implication $\eqref{1}\Rightarrow\eqref{4}$, we first remark that, from \eqref{add-01} and \eqref{add-02}
\beqsn
&&\left(\sum_{\sigma\in\Lambda\cap\Gamma}e^{-2\mu\omega(\sigma)}|\langle u,y_\sigma\rangle|^2\right)^{1/2} \leq \\
\leq&&
\sqrt{2}\left[\left(\sum_{\sigma\in\Lambda}e^{-2\mu\omega(\sigma)}|\langle u,y_\sigma-x_\sigma\rangle|^2\right)^{1/2}
+\left(\sum_{\sigma\in\Lambda}e^{-2\mu\omega(\sigma)}|\langle u,x_\sigma\rangle|^2\right)^{1/2}\right] \\
\leq&& \sqrt{2}\left(\sqrt{\varepsilon_0}+C\right) \| u\|_{M^2_{m_{-\mu}}}.
\eeqsn
Hence we can proceed as in the proof of 
$\eqref{4}\Rightarrow\eqref{1}$ by exchanging $x_\sigma$ and $y_\sigma$; from \eqref{6} we get
that
\beqsn
&&
\sum_{\sigma\in\Lambda\cap\Gamma}e^{\lambda\omega(\sigma)}|\langle u,x_\sigma\rangle|^2-
\sum_{\sigma\in\Lambda\cap\Gamma}e^{\lambda\omega(\sigma)}|\langle u,y_\sigma\rangle|^2\\
\geq&&
-2\sqrt{2}\left(\sqrt{\varepsilon_0}+C\right)\sqrt{\varepsilon_{2(\lambda+\mu)}}\| u\|_{M^2_{m_{-\mu}}}^2
+\sum_{\sigma\in\Lambda\cap\Gamma}e^{\lambda\omega(\sigma)}|\langle u,x_\sigma-y_\sigma\rangle|^2.
\eeqsn
Therefore
\beqsn
\sum_{\sigma\in\Lambda\cap\Gamma}e^{\lambda\omega(\sigma)}|\langle u,y_\sigma\rangle|^2
\leq&& \sum_{\sigma\in\Lambda\cap\Gamma}e^{\lambda\omega(\sigma)}|\langle u,x_\sigma\rangle|^2+2\sqrt{2}\left(\sqrt{\varepsilon_0}+C\right)\sqrt{\varepsilon_{2(\lambda+\mu)}}\| u\|_{M^2_{m_{-\mu}}}^2,
\eeqsn
and \eqref{1} implies \eqref{4}.
\vskip0.3cm
\indent {\rm (ii)} Concerning the second point of the theorem, if $u\in \Sch_\omega'(\R^d)$, from \eqref{S-Sprime-mod} for $p=q=2$ we have that there exists $\mu> 0$ such that $u\in M^2_{m_{-\mu}}(\R^d)$, and for that $\mu$ condition \eqref{5} is satisfied, so we can apply point {\rm (i)} and conclude that the two wave front sets coincide.
\end{proof}

\begin{Rem}
We observe that, since $M^2_{m_{\mu}}(\R^d)$ is continuously embedded in $M^2_{m_0}(\R^d)=L^2(\R^d)$ for every $\mu\geq 0$, condition \eqref{5} implies that $\{y_\sigma\}_{\sigma\in\Lambda}$ in Theorem \ref{th1} is an $\varepsilon_0$-perturbation of the frame $\{x_\sigma\}_{\sigma\in\Lambda}$:
\beqsn
\sum_{\sigma\in\Lambda}\|x_\sigma-y_\sigma\|^2\leq \sum_{\sigma\in\Lambda}\|x_\sigma-y_\sigma\|_{M^2_{m_\mu}}^2\leq\varepsilon_0.
\eeqsn
In view of Lemma \ref{rem1},  $\{y_\sigma\}_{\sigma\in\Lambda}$ is a Bessel sequence in $L^2(\R^d)$; moreover, by \cite[Theorem 3]{C}, we have that $\{ y_\sigma\}_{\sigma\in\Lambda}$ is a frame for $\overline{{\rm span}}\{y_\sigma\}_{\sigma\in\Lambda}$ but without additional conditions on $\varepsilon_0$ we don't know if it is a frame for the whole $L^2(\R^d)$, cf. \cite{C}. Theorem \ref{th1} thus gives a characterization of the Gabor wave front set through a sequence of elements that are perturbations of a Gabor frame but are not a Gabor system and do not need even to be a frame. 
\end{Rem}

\section{Nonstationary perturbations of Gabor frames}
\label{sec4}

Given $\varphi\in\Sch_\omega(\R^d)\setminus\{0\}$, our next goal is to study the stability of the Gabor $\omega$-wave front set by
perturbing the Gabor frame $\{\Pi(\sigma)\varphi\}_{\sigma\in\Lambda}
=\{e^{i\beta m\cdot t}\varphi(t-\alpha n)\}_{m,n\in\Z^d}$ with nonstationary frames, where $\beta$
may depend on $n$ or $\alpha$ may depend on $m$.

Consider first the case $\beta=\beta_n$ and set, for $g\in\Sch_\omega(\R^d)\setminus\{0\}$:
\beqsn
&&g_n(t)=g(t-\alpha n),\\
&&g_{m,n}(t)=e^{i\beta m\cdot t}g_n(t)=e^{i\beta_n m\cdot t}g(t-\alpha n)=
\Pi(\alpha n,\beta_n m)g(t).
\eeqsn

Let us start by a preliminary result, that is taken from \cite{Balazs}, adapted to our case:
\begin{Prop}
\label{lemma2}
Let $g\in\Sch_\omega(\R^d)\setminus\{0\}$ with
$\supp g\subseteq[A_1,B_1]\times\cdots\times[A_d,B_d]$ and
\beqs
\label{7}
\frac{1}{\beta_n}\geq\max_{1\leq j\leq d}(B_j-A_j),\quad\forall n\in\Z^d.
\eeqs
Then $\{g_{m,n}\}_{m,n\in\Z^d}$ is a frame in $L^2(\R^d)$, with lower and upper frame bounds $A$ and $B$ respectively, if and only if
\beqs
\label{10}
&&\inf_{t\in\R^d}\sum_{n\in\Z^d}\frac{1}{\beta_n^d}|g_n(t)|^2=A>0\\
\label{11}
&&\sup_{t\in\R^d}\sum_{n\in\Z^d}\frac{1}{\beta_n^d}|g_n(t)|^2=B<+\infty.
\eeqs
In this case, the frame operator
\beqsn
S:\ L^2(\R^d)&&\longrightarrow L^2(\R^d)\\
f&&\longmapsto \sum_{m,n\in\Z^d}\langle f, g_{m,n}\rangle g_{m,n}
\eeqsn
is given by the multiplication operator
\beqs
\label{17}
Sf=\sum_{n\in\Z^d}\frac{1}{\beta_n^d}|g_n|^2f
\eeqs
and the canonical dual frame is given by
\beqs
\label{14}
\tilde{g}_{m,n}(t)=\frac{g_n(t)}{ \sum_{\ell\in\Z^d}\frac{1}{\beta_\ell^d}|g_\ell(t)|^2}
e^{i\beta_n m\cdot t}.
\eeqs
\end{Prop}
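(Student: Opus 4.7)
The plan is to follow the classical ``painless nonorthogonal expansions'' argument of Daubechies--Grossmann--Meyer, adapted to the nonstationary setting exactly as in \cite{Balazs}. The decisive observation is that the support hypothesis \eqref{7} forces, for each fixed $n\in\Z^d$, the product $f\bar g_n$ to be supported in one period of the modulations $\{e^{i\beta_n m\cdot t}\}_{m\in\Z^d}$, so the inner sum over $m$ should collapse to a Parseval identity on a torus.

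First I would pick, for each $n\in\Z^d$, an axis-parallel box $Q_n\supseteq\supp g_n$ of side length $1/\beta_n$; by \eqref{7} this is possible. On $L^2(Q_n)$ the exponentials $\{e^{i\beta_n m\cdot t}\}_{m\in\Z^d}$ form, after a normalization depending only on $\beta_n^d$, an orthonormal basis, and applying Parseval to $f\bar g_n\in L^2(Q_n)$ should produce the slice identity
\beqsn
\sum_{m\in\Z^d}|\langle f,g_{m,n}\rangle|^2=\frac{1}{\beta_n^d}\int_{\R^d}|f(t)|^2|g_n(t)|^2\,dt.
\eeqsn
Summing this over $n$ and interchanging sum with integral by monotone convergence, one obtains the master identity
\beqsn
\sum_{m,n\in\Z^d}|\langle f,g_{m,n}\rangle|^2=\int_{\R^d}|f(t)|^2 G(t)\,dt,\qquad G(t):=\sum_{n\in\Z^d}\frac{1}{\beta_n^d}|g_n(t)|^2.
\eeqsn

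From here, the two-sided frame bound $A\|f\|^2\leq\sum_{m,n}|\langle f,g_{m,n}\rangle|^2\leq B\|f\|^2$ is equivalent to $A\leq G(t)\leq B$ for a.e.\ $t\in\R^d$; the nontrivial implication follows by testing on $f=|E|^{-1/2}\chi_E$ for arbitrarily small measurable sets $E$ and invoking Lebesgue differentiation, which produces the characterization \eqref{10}--\eqref{11}. For the explicit frame operator \eqref{17}, I would observe that for $t\in Q_n$ the inner series $\sum_m\langle f,g_{m,n}\rangle\,e^{i\beta_n m\cdot t}$ is, up to the same normalizing factor, the Fourier-series reconstruction of $f\bar g_n$ on $Q_n$, hence equals $\beta_n^{-d}f(t)\bar g_n(t)$; multiplying by $g_n(t)$ (which vanishes outside $Q_n$) and summing over $n$ gives $Sf=\sum_n\beta_n^{-d}|g_n|^2 f$. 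Since under \eqref{10}--\eqref{11} the operator $S$ is multiplication by the strictly positive bounded function $G$, it is invertible with $S^{-1}$ equal to multiplication by $1/G$, and applying $S^{-1}$ to $g_{m,n}$ then produces \eqref{14} at once.

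The hard part is the slice identity: one has to choose the boxes $Q_n$ carefully, identify the correctly normalized orthonormal basis of $L^2(Q_n)$ so that the prefactor $1/\beta_n^d$ matches the statement, and justify the unconditional interchange of the double sum with the integral. Once that identity is in hand, the frame bounds, the explicit frame operator, and the canonical dual are immediate algebraic consequences.
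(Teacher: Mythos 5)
Your proposal is correct and follows essentially the same route as the paper: both proofs use the support hypothesis \eqref{7} to view $\langle f,g_{m,n}\rangle$ as Fourier coefficients of $f\bar g_n$ on a box of side $1/\beta_n$, apply Parseval to obtain $\sum_{m,n}|\langle f,g_{m,n}\rangle|^2=\langle Gf,f\rangle$, and read off the frame bounds, the frame operator, and the dual $S^{-1}g_{m,n}$ from this identity. The only cosmetic difference is that you identify $S$ as multiplication by $G$ via the Fourier-series reconstruction on each box, whereas the paper deduces it from the quadratic form together with positivity and self-adjointness of $S$; both are sound.
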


\begin{proof}
Let us first remark that $\{\beta_n\}_{n\in\Z^d}$ is not only uniformly bounded from above by \eqref{7}, but it is also bounded from below from assumption \eqref{11}, since
\beqsn
\frac{1}{\beta_n^d}|g(t-\alpha n)|^2\leq\sum_{n\in\Z^d}\frac{1}{\beta_n^d}
|g(t-\alpha n)|^2\leq B\qquad\forall t\in\R^d
\eeqsn
implies 
\beqsn
\frac{1}{\beta_n^d}\sup_{t\in\R^d}|g(t-\alpha n)|^2\leq B
\eeqsn
and hence
\beqs
\label{12}
\beta_n^d\geq\frac BL>0
\eeqs
for
\beqsn
L:=\sup_{t\in\R^d}|g(t)|^2=\sup_{t\in\R^d}|g(t-\alpha n)|^2.
\eeqsn

Now we consider the series
\beqsn
\sum_{m,n\in\Z^d}|\langle f,g_{m,n}\rangle|^2=\sum_{m,n\in\Z^d}\left|\int_{\R^d}f(t)\bar{g}_n(t)
e^{-i\beta_n m\cdot t}dt\right|^2;
\eeqsn
by assumption \eqref{7}
\beqsn
\supp g_n\subseteq&&\prod_{j=1}^d[A_j+\alpha n_j,B_j+\alpha n_j]=:J_n\\
\subseteq&&\prod_{j=1}^d[A_j+\alpha n_j,A_j+\alpha n_j+\frac{1}{\beta_n}]=:I_n
\eeqsn
and hence
\beqs
\nonumber
\sum_{m,n\in\Z^d}|\langle f,g_{m,n}\rangle|^2=&&\sum_{m,n\in\Z^d}\left|\int_{I_n}f(t)\bar{g}_n(t)e^{-i\beta_n m\cdot t}dt\right|^2\\
\label{8}
=&&\sum_{n\in\Z^d}\sum_{m\in\Z^d}|\langle f\bar{g}_n,e^{i\beta_n m\cdot t}\rangle|_{L^2(I_n)}^2.
\eeqs

Noting that $\sum_{\ell\in\Z^d}T_{\frac{\ell}{\beta_n}}(f\bar{g}_n)(t)$ is a $1/\beta_n$-periodic 
function and writing
\beqsn
f\bar{g}_n=\chi_{I_n}\sum_{\ell\in\Z^d}T_{\frac{\ell}{\beta_n}}(f\bar{g}_n),
\eeqsn
where $\chi_{I_n}$ is the characteristic function of $I_n$, we can interprete $\langle f\bar{g}_n,e^{i\beta_n m\cdot t}\rangle_{L^2(I_n)}$ as the Fourier coefficients of
$f\bar{g}_n$ and apply Parseval's identity in \eqref{8} to obtain:
\beqs
\nonumber
\sum_{m,n\in\Z^d}|\langle f,g_{m,n}\rangle|^2=&&\sum_{n\in\Z^d}\frac{1}{\beta_n^d}\|f\bar{g}_n\|^2_{L^2(I_n)}
=\sum_{n\in\Z^d}\frac{1}{\beta_n^d}\langle f\bar{g}_n,f\bar{g}_n\rangle_{L^2(I_n)}\\
\label{9}
=&&\sum_{n\in\Z^d}\frac{1}{\beta_n^d}\langle f|{g}_n|^2,f\rangle_{L^2(I_n)}
=\Big\langle\sum_{n\in\Z^d}\frac{1}{\beta_n^d}|g_n|^2f,f\Big\rangle_{L^2(\R^d)}
\eeqs
by Lebesgue's dominated convergence theorem, since for all $N\in\N$
\beqsn
\sum_{\afrac{n\in\Z^d}{|n|\leq N}}\frac{1}{\beta_n^d}|g(t-\alpha n)|^2|f(t)|^2
\leq \sum_{n\in\Z^d}\frac{1}{\beta_n^d}|g_n(t)|^2|f(t)|^2
\leq B|f(t)|^2\in L^1(\R^d)
\eeqsn
by assumption \eqref{11}. In particular, the series $\sum_{m,n\in\Z^d}|\langle f,g_{m,n}\rangle|^2$ is convergent for every $f\in L^2(\R^d)$, which means that $\{g_{m,n}\}_{m,n\in\Z^d}$ is a Bessel sequence in $L^2$; we then have that the series 
$\sum_{m,n\in\Z^d}\langle f,g_{m,n}\rangle g_{m,n}$ converges unconditionally, and from \eqref{9}
\beqs
\label{add-03}
\langle Sf,f\rangle =&&\sum_{m,n\in\Z^d}\langle f,g_{m,n}\rangle \langle g_{m,n},f\rangle =\sum_{m,n\in\Z^d}|\langle f,g_{m,n}\rangle|^2 =\Big\langle\sum_{n\in\Z^d}\frac{1}{\beta_n^d}|g_n|^2f,f\Big\rangle.
\eeqs
Since the frame operator is positive and self-adjoint (see \cite[\S~5.1]{G}), from \cite[Thm.~7.16]{A} and \eqref{add-03} we have that the frame operator $S$ is the multiplication operator \eqref{17}.
Moreover, it follows from \eqref{add-03} that $\{g_{m,n}\}_{m,n\in\Z^d}$ is a frame in $L^2(\R^d)$ if and only if
conditions \eqref{10} and \eqref{11} are satisfied and in this case the canonical dual window
$\tilde{g}_{m,n}$ of $g_{m,n}$ is given by
\beqsn
\tilde{g}_{m,n}(t)=S^{-1}g_{m,n}(t)=\frac{g_n(t)}{ \sum_{\ell\in\Z^d}\frac{1}{\beta_\ell^d}
|g_\ell(t)|^2}e^{i\beta_n m\cdot t}.
\eeqsn
\end{proof}

Note that, even if $\{g_{m,n}\}_{m,n\in\Z^d}$ is a frame, if the sequence $\{\beta_n\}$ is not constant, in general $\{g_{m,n}\}$ is not an $\varepsilon$-perturbation (according to Definition~\ref{defpert})
of the Gabor frame $\{\varphi_{m,n}\}:=\{\Pi(\alpha n,\beta m)g\}$, for any $\beta\in\R$, and so we cannot directly apply Theorem~\ref{th1}
to show that $\WF^{\{g_{m,n}\}}_\omega(u)$ is equal to $\WF^G_\omega(u)$, for $u\in\Sch'_\omega(\R^d)$.
Indeed, taking for example $g(t)=e^{-t^2/2}$, for $t\in\R$, we can compute:
\beqsn
\|\varphi_{m,n}-g_{m,n}\|^2=&&
\|e^{i\beta m t}g(t-\alpha n)-e^{i\beta_n m t}g(t-\alpha n)\|^2\\
=&&\int_{-\infty}^{+\infty}|e^{i\beta mt}-e^{i\beta_n mt}|^2e^{-(t-\alpha n)^2}dt\\
=&&\int_{-\infty}^{+\infty}|1-e^{i(\beta_n-\beta) m t}|^2e^{-(t-\alpha n)^2}dt\\
=&&\int_{-\infty}^{+\infty}[(1-\cos((\beta_n-\beta)mt))^2+\sin^2((\beta_n-\beta)mt)]e^{-(t-\alpha n)^2}dt.
\eeqsn
By the change of variables $s=t-\alpha n$ we have
\beqs
\nonumber
\|\varphi_{m,n}-g_{m,n}\|^2=&&
\int_{-\infty}^{+\infty}(2-2\cos((\beta_n-\beta)m(s+\alpha n))e^{-s^2}ds\\
\nonumber
=&&2\int_{-\infty}^{+\infty}e^{-s^2}ds-2\int_{-\infty}^{+\infty}\cos((\beta_n-\beta)ms)
\cos((\beta_n-\beta)m\alpha n)e^{-s^2}ds\\
\nonumber
&&+2\int_{-\infty}^{+\infty}\sin((\beta_n-\beta)ms)
\sin((\beta_n-\beta)m\alpha n)e^{-s^2}ds\\
\nonumber
=&&2\sqrt{\pi}-2\cos((\beta_n-\beta)m\alpha n)\int_{-\infty}^{+\infty}\cos((\beta_n-\beta)ms)
e^{-s^2}ds\\
\nonumber
&&+2\sin((\beta_n-\beta)m\alpha n)\int_{-\infty}^{+\infty}\sin((\beta_n-\beta)ms)
e^{-s^2}ds\\
\label{27''}
=&&2\sqrt{\pi}-2\cos((\beta_n-\beta)m\alpha n)\sqrt{\pi}e^{-\frac{(\beta_n-\beta)^2m^2}{4}}
\eeqs
since $\int_{-\infty}^{+\infty}e^{-s^2}ds=\sqrt{\pi}$, $\int_{-\infty}^{+\infty}\sin((\beta_n-\beta)ms)
e^{-s^2}ds=0$ being the integral of an odd function, and by the holomorphy of $e^{-u^2}$:
\beqsn
\int_{-\infty}^{+\infty}\cos(ax)e^{-x^2}dx=&&\Re \int_{-\infty}^{+\infty}e^{iax}e^{-x^2}dx
=\Re \int_{-\infty}^{+\infty}e^{-\left(x-i\frac{a}{2}\right)^2-\frac{a^2}{4}}dx\\
=&&e^{-\frac{a^2}{4}}\Re\int_{-\infty-i\frac a2}^{+\infty-i\frac a2}e^{-u^2}du
=e^{-\frac{a^2}{4}}\Re\int_{-\infty}^{+\infty}e^{-u^2}du\\
=&&\sqrt{\pi}e^{-\frac{a^2}{4}}.
\eeqsn

From \eqref{27''} we have that
\beqsn
\|\varphi_{m,n}-g_{m,n}\|^2=
2\sqrt{\pi}\left(1-\cos((\beta_n-\beta)m\alpha n)e^{-\frac{(\beta_n-\beta)^2m^2}{4}}\right)
\eeqsn
is an even function of $m$ that tends to $2\sqrt{\pi}$ as $m\to+\infty$ for every fixed $n$ such that $\beta_n\neq\beta$; then, if $\{\beta_n\}$ is not constant, for every choice of $\beta$ we have 
\beqsn
\sum_{m,n\in\Z}\|\varphi_{m,n}-g_{m,n}\|^2=+\infty
\eeqsn
and $\{g_{m,n}\}_{m,n\in\Z^d}$
is not an $\varepsilon$-perturbation of 
$\{\varphi_{m,n}\}_{m,n\in\Z^d}$. Of course one could wonder if there exists another Gabor frame (with another window and another lattice) of which $\{g_{m,n}\}_{m,n\in\Z^d}$ is an $\varepsilon$-perturbation, but this seems a difficult problem in general. 

We thus need a different approach to study the wave front set $\WF^{\{g_{m,n}\}}_\omega(u)$.
First remark that $g_{m,n}\in\Sch_\omega(\R^d)$ since $g\in\Sch_\omega(\R^d)$.
We shall need in the following that also $\tilde{g}_{m,n}\in\Sch_\omega(\R^d)$.
To this aim we prove that
\beqs
\label{30}
\frac{1}{G(t)}:=\frac{1}{ \sum_{\ell\in\Z^d}\frac{1}{\beta_\ell^d}
|g_\ell(t)|^2}
\eeqs
is a {\em multiplier} in $\Sch_\omega(\R^d)$, i.e. $\frac 1G f\in\Sch_\omega(\R^d)$ for all
$f\in\Sch_\omega(\R^d)$.

Let us recall from \cite{AM-multipliers} (cf. also \cite{ABJO-cptWeyl} for equivalent formulations),
that a $C^\infty$ function $F$ is a multiplier in $\Sch_\omega(\R^d)$ if and only if $F$ is in the space 
\beqsn
\calO_{M,\omega}(\R^d):=\{F\in C^\infty(\R^d):&&
\forall\lambda>0\, \exists C_\lambda,\mu_\lambda>0\, \mbox{s.t.}\\
&&|D^\kappa F(t)|\leq C_\lambda e^{\lambda\varphi^*\left(\frac{|\kappa|}{\lambda}\right)}
e^{\mu_\lambda \omega(t)}\ \forall\kappa\in\N_0^d,t\in\R^d\}.
\eeqsn

\begin{Prop}
\label{lemma3}
Under the assumptions of Proposition~\ref{lemma2}, the function $\frac{1}{G(t)}$ defined
in \eqref{30} is a multiplier in $\Sch_\omega(\R^d)$, and hence the canonical dual frame 
$\tilde{g}_{m,n}$ defined in \eqref{14} is in $\Sch_\omega(\R^d)$.
\end{Prop}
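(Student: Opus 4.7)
The strategy is to verify directly that $\frac{1}{G} \in \calO_{M,\omega}(\R^d)$, using the characterization of multipliers in $\Sch_\omega$ recalled just before the proposition. The argument then splits naturally into three steps: first, bound the derivatives of $G$ itself; second, derive bounds for $1/G$ from the uniform positivity $G\geq A>0$ and the bounds on $G$; finally, combine this with the explicit form of $\tilde g_{m,n}$ to conclude.

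For the first step, note that since $g$ has support in $\prod_j[A_j,B_j]$, the family $\{g_\ell\}_{\ell\in\Z^d}$ is locally finite: for each $t\in\R^d$, the number of $\ell\in\Z^d$ with $g_\ell(t)\neq0$ is bounded by a constant $N$ depending only on $\alpha$ and the support of $g$. Hence $G\in C^\infty(\R^d)$ and $D^\kappa G(t)=\sum_\ell\beta_\ell^{-d}D^\kappa(|g_\ell|^2)(t)$ has at most $N$ nonzero terms at any $t$. Combining the uniform lower bound $\beta_\ell^d\geq B/L$ from \eqref{12}, the Leibniz rule applied to $|g_\ell|^2=g_\ell\bar g_\ell$, the estimate $|D^\gamma g_\ell(t)|\leq q_{\lambda,0}(g)\,e^{\lambda\varphi^*(|\gamma|/\lambda)}$ (which is uniform in $\ell$ and $t$ by translation invariance of the seminorm $q_{\lambda,0}$ and holds for every $\lambda>0$ since $g\in\Sch_\omega$), and the standard Young-conjugate inequality controlling sums $\sum_{\gamma\leq\kappa}\binom{\kappa}{\gamma}e^{\lambda\varphi^*(|\gamma|/\lambda)}e^{\lambda\varphi^*(|\kappa-\gamma|/\lambda)}$ by a term of the same shape, one obtains
$$|D^\kappa G(t)|\leq C_\lambda\,e^{\lambda\varphi^*(|\kappa|/\lambda)}$$
uniformly in $t\in\R^d$, for every $\lambda>0$. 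In particular $G\in\calO_{M,\omega}(\R^d)$.

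For the second step, by \eqref{10} we have $G(t)\geq A>0$, so $1/G\in C^\infty(\R^d)$ with $|1/G(t)|\leq 1/A$. Differentiating the identity $G\cdot(1/G)=1$ via Leibniz gives, for $|\kappa|\geq 1$,
$$D^\kappa(1/G)=-\frac{1}{G}\sum_{0<\gamma\leq\kappa}\binom{\kappa}{\gamma}D^{\kappa-\gamma}(1/G)\cdot D^\gamma G.$$
An induction on $|\kappa|$ then propagates the ultradifferentiable bound from $G$ to $1/G$: using $1/G\leq 1/A$, the inductive hypothesis on $D^{\kappa-\gamma}(1/G)$, the estimate on $D^\gamma G$ from the first step, and once more the Young-conjugate inequality for weighted binomial sums, one obtains
$$|D^\kappa(1/G)(t)|\leq\tilde C_\lambda\,e^{\lambda\varphi^*(|\kappa|/\lambda)}$$
uniformly in $t$, for every $\lambda>0$, placing $1/G$ in $\calO_{M,\omega}(\R^d)$. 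I expect the bookkeeping in this inductive step to be the main technical obstacle: one has to choose the ultradifferentiable parameter at each stage so that the loss is summable across the induction, but this is a well-understood manipulation in the Braun--Meise--Taylor framework.

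Finally, write $\tilde g_{m,n}(t)=\frac{1}{G(t)}\cdot g_n(t)\cdot e^{i\beta_n m\cdot t}$. Translation invariance of $\Sch_\omega$ gives $g_n=T_{\alpha n}g\in\Sch_\omega(\R^d)$; multiplication by the multiplier $1/G$ keeps the product in $\Sch_\omega(\R^d)$; and modulation by $e^{i\beta_n m\cdot t}$ (continuous on $\Sch_\omega$) preserves the class. Hence $\tilde g_{m,n}\in\Sch_\omega(\R^d)$, which completes the proof.
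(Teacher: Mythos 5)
Your overall architecture (estimate the derivatives of $G$, invert using the lower bound \eqref{10}, then conclude for $\tilde g_{m,n}$) matches the paper's, and your first and third steps are essentially what the paper does: local finiteness of the sum defining $G$ together with the lower bound \eqref{12} on $\beta_\ell$ gives the analogue of \eqref{23}, and the final step is immediate once $1/G\in\calO_{M,\omega}(\R^d)$. The genuine difference is the core step: the paper inverts $G$ via the Fa\`a di Bruno formula \eqref{21} together with the combinatorial estimate \eqref{combinatoria} and the convexity/subadditivity manipulations \eqref{24}--\eqref{26}, whereas you propose a Leibniz induction on the identity $G\cdot(1/G)=1$.

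That inductive step is where your argument has a real gap. With the hypothesis $|D^{\mu}(1/G)|\leq\tilde C_\lambda e^{\lambda\varphi^*(|\mu|/\lambda)}$ for $|\mu|<|\kappa|$, the recursion gives
\begin{equation*}
|D^\kappa(1/G)|\leq\frac{C_\lambda\tilde C_\lambda}{A}\sum_{0<\gamma\leq\kappa}\binom{\kappa}{\gamma}e^{\lambda\varphi^*\left(\frac{|\kappa-\gamma|}{\lambda}\right)}e^{\lambda\varphi^*\left(\frac{|\gamma|}{\lambda}\right)},
\end{equation*}
and using only convexity of $\varphi^*$ this is bounded by $\frac{C_\lambda\tilde C_\lambda}{A}\,4^{|\kappa|}e^{\lambda\varphi^*(|\kappa|/\lambda)}$, which is not $\leq\tilde C_\lambda e^{\lambda\varphi^*(|\kappa|/\lambda)}$: the induction does not close. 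Your proposed remedy --- re-tuning the parameter ``at each stage so that the loss is summable'' --- cannot work either, because the loss compounds \emph{multiplicatively} over the $|\kappa|$ levels of the recursion, so the parameter would degrade without bound. What actually makes either proof work is the loss-free, factorial-normalized superadditivity
\begin{equation*}
\frac{e^{\lambda\varphi^*\left(\frac{|\gamma|}{\lambda}\right)}}{\gamma!}\cdot\frac{e^{\lambda\varphi^*\left(\frac{|\sigma|}{\lambda}\right)}}{\sigma!}\leq\frac{e^{\lambda\varphi^*\left(\frac{|\gamma+\sigma|}{\lambda}\right)}}{(\gamma+\sigma)!},
\end{equation*}
i.e.\ the paper's \eqref{25}, which is derived from the subadditivity $(\alpha)$ of $\omega$ and plays the role of strong log-convexity: it turns $\binom{\kappa}{\gamma}e^{\lambda\varphi^*(|\kappa-\gamma|/\lambda)}e^{\lambda\varphi^*(|\gamma|/\lambda)}$ into $e^{\lambda\varphi^*(|\kappa|/\lambda)}$ with no loss and the same $\lambda$. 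With this inequality your route can indeed be completed (strengthen the hypothesis to $|D^\mu(1/G)|\leq\tilde C H^{|\mu|}e^{\lambda\varphi^*(|\mu|/\lambda)}$ with $H$ large, so that the residual sum $\sum_{0<\gamma\leq\kappa}H^{-|\gamma|}\leq 2d/H$ is absorbed by $A/C_\lambda$, and remove $H^{|\kappa|}$ by a single change of parameter at the very end), and it would then be a somewhat more elementary alternative to Fa\`a di Bruno. As written, however, the key inequality is absent, and the step you dismiss as well-understood bookkeeping is precisely where the mathematical content of the proposition lies.
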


\begin{proof}
Let us prove that $\frac{1}{G(t)}\in\calO_{M,\omega}(\R^d)$. For all $\lambda>0$ we must find
$C_\lambda,\mu_\lambda>0$ such that
\beqs
\label{18}
\left|D^\kappa\frac{1}{G(t)}\right|
\leq C_\lambda e^{\lambda\varphi^*\left(\frac{|\kappa|}{\lambda}\right)}
e^{\mu_\lambda \omega(t)}\quad\forall\kappa\in\N_0^d,t\in\R^d.
\eeqs

By the Fa\`a di Bruno formula (see, for instance, \cite{BM}):
\beqs
\label{21'}
D^\kappa\frac{1}{G(t)}
=&&\sum_{1\leq\ell\leq|\kappa|}\kappa!\left.\left(\frac 1x\right)^{(\ell)}\right|_{x=G(t)}
\sum_{\substack{c_\gamma\in\N_0\\\sum_{|\gamma|>0}c_\gamma=\ell\\\sum_{|\gamma|>0}\gamma c_\gamma=\kappa}}\prod_{|\gamma|>0}\frac{1}{c_\gamma!}
\left(\frac{D^\gamma G(t)}{\gamma!}\right)^{c_\gamma}\\
\label{21}
=&&\sum_{1\leq\ell\leq|\kappa|}\kappa!\frac{(-1)^\ell \ell!}{(G(t))^{\ell+1}}
\sum_{\substack{c_\gamma\in\N_0\\\sum_{|\gamma|>0}c_\gamma=\ell\\\sum_{|\gamma|>0}\gamma c_\gamma=\kappa}}\prod_{|\gamma|>0}\frac{1}{c_\gamma!}
\left(\frac{D^\gamma G(t)}{\gamma!}\right)^{c_\gamma}.
\eeqs

To estimate $D^\gamma G(t)$ we shall only need that $g\in\Sch_\omega(\R^d)$ has compact support and $\{\beta_n\}_{n\in\Z^d}$ is bounded (because of \eqref{7} and \eqref{12}).
For this reason, without loss of generality, we can estimate, instead of $D^\gamma G$, the $\gamma$-derivatives of
\beqs
\label{22}
\tilde{G}(t)=\sum_{n\in\Z^d}g(t-\alpha n)
\eeqs
for a real valued non-negative $g\in\Sch_\omega(\R^d)$ with compact support
(we don't mind about the square since $\Sch_\omega(\R^d)$ is an algebra; see \cite{Fieker,Bjorck}).
Remark also that $\tilde{G}(t)$ is $\alpha$-periodic and the sum in \eqref{22} is locally finite since $g$ has compact support.
We can then derive term by term and estimate, for some $\bar N\in\N$ depending only on the support of $g$ and for all $\lambda>0$:
\beqs
\nonumber
|D^\gamma\tilde G(t)|\leq &&\sum_{n\in\Z^d}|D^\gamma g(t-\alpha n)|
\leq \sup_{t\in[0,\alpha]}\sum_{n\in\Z^d}|D^\gamma g(t-\alpha n)|\\
\nonumber
=&&\sup_{t\in[0,\alpha]}\sum_{\afrac{n\in\Z^d}{|n|\leq\bar N}}|D^\gamma g(t-\alpha n)|
\leq\sup_{t\in[0,\alpha]}\sum_{\afrac{n\in\Z^d}{|n|\leq\bar N}}
C_\lambda e^{\lambda\varphi^*\left(\frac{|\gamma|}{\lambda}\right)}\\
\label{23}
\leq&&C_{\lambda,\bar N} e^{\lambda\varphi^*\left(\frac{|\gamma|}{\lambda}\right)}
\eeqs
for some $C_\lambda,C_{\lambda,\bar N}>0$, since $g\in\Sch_\omega(\R^d)$.

Substituting \eqref{23} into \eqref{21} and applying \eqref{10}, we get:
\beqs
\nonumber
\left|D^\kappa\frac{1}{G(t)}\right|
\leq&&\sum_{1\leq\ell\leq|\kappa|}\kappa!\left|\frac{(-1)^\ell \ell!}{(G(t))^{\ell+1}}\right|
\sum_{\substack{c_\gamma\in\N_0\\\sum_{|\gamma|>0}c_\gamma=\ell\\\sum_{|\gamma|>0}\gamma c_\gamma=\kappa}}\prod_{|\gamma|>0}\frac{1}{c_\gamma!}
\left|\frac{D^\gamma G(t)}{\gamma!}\right|^{c_\gamma}\\
\label{27}
\leq&&\sum_{1\leq\ell\leq|\kappa|}\frac{\kappa! \ell!}{A^{\ell+1}}
\sum_{\substack{c_\gamma\in\N_0\\\sum_{|\gamma|>0}c_\gamma=\ell\\\sum_{|\gamma|>0}\gamma c_\gamma=\kappa}}\prod_{|\gamma|>0}\frac{1}{c_\gamma!}
\left(\frac{C_{2^d\lambda,\bar N}e^{2^d\lambda\varphi^*\left(\frac{|\gamma|}{2^d\lambda}\right)}}{\gamma!}\right)^{c_\gamma}.
\eeqs

Inspired by the techniques in \cite[Prop.~2.1]{FG}, we use the convexity of $\varphi^*$ to use  repeatedly the following estimate (see also \cite[Lemma~A.1(ix)]{BJO-realPW}):
\beqs
\label{FG1}
e^{2^d\lambda\varphi^*\left(\frac{\gamma_1+\dots+\gamma_d}{2^d\lambda}\right)}\leq&&
D_\lambda
e^{2^{d-1}\lambda\varphi^*\left(\frac{\gamma_1+\dots+\gamma_d-1}{2^{d-1}\lambda}\right)}
\eeqs
for some $D_\lambda>0$.
To adapt this technique to the multidimensional case we consider, for any fixed $\gamma\in\N_0^d$, the multiindex $\delta^{(\gamma)}\in\N_0^d$
whose $j$-th component is defined by
\beqsn
\delta^{(\gamma)}_j=\begin{cases}
1, & \mbox{if } \gamma_j>0\cr 
0, & \mbox{if } \gamma_j=0.
\end{cases}
\eeqsn
Note that $|\delta^{(\gamma)}|\leq d$. Then, using again \eqref{FG1}, we can write:
\beqs
\nonumber
e^{2^d\lambda\varphi^*\left(\frac{\gamma_1+\dots+\gamma_d}{2^d\lambda}\right)}\leq&&
D'_\lambda
e^{2^{d-2}\lambda\varphi^*\left(\frac{\gamma_1+\dots+\gamma_d-2}{2^{d-2}\lambda}\right)}
\leq\dots\\ 
\label{24}
\leq&&
D_{\lambda,d}
e^{\lambda\varphi^*\left(\frac{\gamma_1+\dots+\gamma_d-|\delta^{(\gamma)}|}{\lambda}\right)}
=D_{\lambda,d}
e^{\lambda\varphi^*\left(\frac{|\gamma-\delta^{(\gamma)}|}{\lambda}\right)}
\eeqs
for some $D'_\lambda,D_{\lambda,d}>0$.

Moreover, by the subadditivity of $\omega$,
\beqs
\nonumber
\frac{e^{\lambda\varphi^*\left(\frac{|\gamma|}{\lambda}\right)}}{\gamma!}
\cdot
\frac{e^{\lambda\varphi^*\left(\frac{|\sigma|}{\lambda}\right)}}{\sigma!}=&&
\sup_{s\geq0}\frac{e^{|\gamma|s-\lambda\varphi(s)}}{\gamma!}\cdot
\sup_{t\geq0}\frac{e^{|\sigma|t-\lambda\varphi(t)}}{\sigma!}
=\sup_{s',t'\geq1}\frac{e^{|\gamma|\log s'+|\sigma|\log t'-\lambda(\omega(s')+\omega(t'))}}{\gamma!\sigma!}\\
\nonumber
\leq&&\sup_{s',t'\geq1}\frac{(s')^{|\gamma|}(t')^{|\sigma|}}{\gamma!\sigma!}
e^{-\lambda\omega(s'+t')}
=\sup_{s',t'\geq1}\prod_{j=1}^d
\frac{(s')^{\gamma_j}(t')^{\sigma_j}}{\gamma_j!\sigma_j!}
e^{-\lambda\omega(s'+t')}\\
\nonumber
\leq&&\sup_{s',t'\geq1}\prod_{j=1}^d
\frac{(s'+t')^{\gamma_j+\sigma_j}}{(\gamma_j+\sigma_j)!}
e^{-\lambda\omega(s'+t')}
=\sup_{s',t'\geq1}
\frac{(s'+t')^{|\gamma+\sigma|}}{(\gamma+\sigma)!}
e^{-\lambda\omega(s'+t')}\\
\nonumber
=&&\frac{1}{(\gamma+\sigma)!}\sup_{s',t'\geq1}
e^{|\gamma+\sigma|\log(s'+t')-\lambda\omega(s'+t')}
\leq\frac{1}{(\gamma+\sigma)!}\sup_{x\geq0}
e^{|\gamma+\sigma|x-\lambda\varphi(x)}\\
\label{25}
=&&\frac{e^{\lambda\varphi^*\left(\frac{|\gamma+\sigma|}{\lambda}\right)}}{(\gamma+\sigma)!}.
\eeqs

Then, by repeatedly applying \eqref{25}:
\beqs
\nonumber
\prod_{|\gamma|>0}\Bigg(
\frac{e^{\lambda\varphi^*\left(\frac{|\gamma-\delta^{(\gamma)}|}{\lambda}\right)}}{\gamma!}\Bigg)^{c_\gamma}\leq&&
\prod_{|\gamma|>0}\Bigg(
\frac{e^{\lambda\varphi^*\left(\frac{|\gamma-\delta^{(\gamma)}|}{\lambda}\right)}}{(\gamma-\delta^{(\gamma)})!}\Bigg)^{c_\gamma}
\leq\prod_{|\gamma|>0}\Bigg(
\frac{e^{\lambda\varphi^*\left(\frac{|\gamma-\delta^{(\gamma)}|c_\gamma}{\lambda}\right)}}
{((\gamma-\delta^{(\gamma)}) c_\gamma)!}
\Bigg)\\
\label{26}
\leq&&
\frac{e^{\lambda\varphi^*\left(\frac{\sum_{|\gamma|>0}|\gamma-\delta^{(\gamma)}|c_\gamma}{\lambda}\right)}}{\left(\sum_{|\gamma|>0}(\gamma-\delta^{(\gamma)}) c_\gamma\right)!}
\leq 
\frac{e^{\lambda\varphi^*\left(\frac{|\kappa|-\sum_{|\gamma|>0}|\delta^{(\gamma)}|c_\gamma}{\lambda}\right)}}{(\kappa-\sum_{|\gamma|>0}\delta^{(\gamma)}c_\gamma)!}\,,
\eeqs
since, for all $1\leq j\leq d$,
\beqsn
\sum_{|\gamma|>0}(\gamma_j-\delta^{(\gamma)}_j)c_\gamma
=\sum_{|\gamma|>0}\gamma_jc_\gamma-\sum_{|\gamma|>0}\delta^{(\gamma)}_jc_\gamma
=\kappa_j-\sum_{|\gamma|>0}\delta^{(\gamma)}_jc_\gamma
\eeqsn
and
\beqsn
\sum_{|\gamma|>0}|\gamma-\delta^{(\gamma)}|c_\gamma
=\sum_{j=1}^d(\kappa_j-\sum_{|\gamma|>0}\delta^{(\gamma)}_jc_\gamma)=|\kappa|-
\sum_{|\gamma|>0}|\delta^{(\gamma)}|c_\gamma.
\eeqsn

Substituting \eqref{24} and then \eqref{26} into \eqref{27} we obtain that
\beqs
\nonumber
\left|D^\kappa\frac{1}{G(t)}\right|
\leq&&\sum_{1\leq\ell\leq|\kappa|}\ell!\sum_{\substack{c_\gamma\in\N_0\\\sum_{|\gamma|>0}c_\gamma=\ell\\\sum_{|\gamma|>0}\gamma c_\gamma=\kappa}}\frac{(\sum_{|\gamma|>0}\delta^{(\gamma)}c_\gamma)!}{A^{\ell+1}} \,\frac{\kappa!}{(\sum_{|\gamma|>0}\delta^{(\gamma)}c_\gamma)!(\kappa-\sum_{|\gamma|>0}\delta^{(\gamma)}c_\gamma)!}\\
\nonumber
&&\cdot\prod_{|\gamma|>0}\frac{1}{c_\gamma!}
(C_{2^d\lambda,\bar N}D_{\lambda,d})^{c_\gamma}
e^{\lambda\varphi^*\left(\frac{|\kappa|-\sum_{|\gamma|>0}|\delta^{(\gamma)}|c_\gamma}{\lambda}\right)}\\
\label{A27}
\leq&&\sum_{1\leq\ell\leq|\kappa|}
\ell!\!\!\!
\sum_{\substack{c_\gamma\in\N_0\\\sum_{|\gamma|>0}c_\gamma=\ell\\\sum_{|\gamma|>0}\gamma c_\gamma=\kappa}}(\sum_{|\gamma|>0}\delta^{(\gamma)}c_\gamma)!A_\lambda^\ell 2^{|\kappa|}
e^{\lambda\varphi^*\left(\frac{|\kappa|-\sum_{|\gamma|>0}|\delta^{(\gamma)}|c_\gamma}{\lambda}\right)}\prod_{|\gamma|>0}\frac{1}{c_\gamma!}
\eeqs
for some $A_\lambda\geq1$, being
\beqsn
&&\prod_{|\gamma|>0}(C_{2^d\lambda,\bar N}D_{\lambda,d})^{c_\gamma}
=(C_{2^d\lambda,\bar N}D_{\lambda,d})^{\sum_{|\gamma|>0}c_\gamma}
=(C_{2^d\lambda,\bar N}D_{\lambda,d})^\ell.
\eeqsn
Now, by \cite[Lemma~A.1 (viii), (ii)]{BJO-realPW} we have that
\beqs
\nonumber
&&(\sum_{|\gamma|>0}\delta^{(\gamma)}c_\gamma)!A_\lambda^\ell 
e^{\lambda\varphi^*\left(\frac{|\kappa|-\sum_{|\gamma|>0}|\delta^{(\gamma)}|c_\gamma}{\lambda}\right)} \leq\\
\nonumber &&\qquad\leq(\sum_{|\gamma|>0}\delta^{(\gamma)}c_\gamma)!\,A_\lambda^{(\sum_{|\gamma|>0}|\delta^{(\gamma)}|c_\gamma)} 
e^{\lambda\varphi^*\left(\frac{|\kappa|-\sum_{|\gamma|>0}|\delta^{(\gamma)}|c_\gamma}{\lambda}\right)} \\
\nonumber &&\qquad\leq A'_\lambda\, e^{\lambda\varphi^*\left(\frac{\sum_{|\gamma|>0}|\delta^{(\gamma)}|c_\gamma}{\lambda}\right)} e^{\lambda\varphi^*\left(\frac{|\kappa|-\sum_{|\gamma|>0}|\delta^{(\gamma)}|c_\gamma}{\lambda}\right)} \\
\label{R3-1} &&\qquad\leq A'_\lambda\, e^{\lambda\varphi^*\left(\frac{|\kappa|}{\lambda}\right)}
\eeqs
for some $A'_\lambda>0$, since
$\ell=\sum_{|\gamma|>0}c_\gamma\leq\sum_{|\gamma|>0}|\delta^{(\gamma)}|c_\gamma$, being
$|\gamma|=0$ if $|\delta^{(\gamma)}|=0$.

Substituting \eqref{R3-1} in \eqref{A27} we then have
\beqs
\label{R3-2}
\left|D^\kappa\frac{1}{G(t)}\right|
\leq\sum_{1\leq\ell\leq|\kappa|}2^{|\kappa|} A'_\lambda\, e^{\lambda\varphi^*\left(\frac{|\kappa|}{\lambda}\right)}
\ell!\sum_{\substack{c_\gamma\in\N_0\\\sum_{|\gamma|>0}c_\gamma=\ell\\\sum_{|\gamma|>0}\gamma c_\gamma=\kappa}}\prod_{|\gamma|>0}\frac{1}{c_\gamma!}.
\eeqs

We now claim that
\beqs
\label{combinatoria}
\ell!
\sum_{\substack{c_\gamma\in\N_0\\\sum_{|\gamma|>0}c_\gamma=\ell\\\sum_{|\gamma|>0}\gamma c_\gamma=\kappa}}\prod_{|\gamma|>0}\frac{1}{c_\gamma!}
\leq 2^{|\kappa|+d\ell-d}.
\eeqs

To prove \eqref{combinatoria} we set
\beqsn
J_\kappa:=\{\gamma\in\N_0^d: |\gamma|>0,\gamma\leq\kappa\}
\eeqsn
and consider, for any given $A_\gamma>0$, $\gamma\in\N_0^d$, the following sums of products
\beqsn
\Sigma:=&&\sum_{\substack{\gamma_1+\dots+\gamma_\ell=\kappa\\|\gamma_i|>0}}
A_{\gamma_1}\dots A_{\gamma_\ell}\\
S:=&&\sum_{\gamma\in J_\kappa}A_\gamma.
\eeqsn
Clearly 
\beqsn
S^\ell=\sum_{\gamma_1,\dots,\gamma_\ell\in J_\kappa}A_{\gamma_1}\dots A_{\gamma_\ell}
\eeqsn
contains all the factors that appear in $\Sigma$ and by the multinomial formula
\beqsn
S^\ell=\sum_{\substack{c_\gamma\in\N_0\\\sum_{\gamma\in J_\kappa}c_\gamma=\ell}}
\frac{\ell!}{\prod_{\gamma\in J_\kappa}c_\gamma!}
\prod_{\gamma\in J_\kappa}A_\gamma^{c_\gamma}.
\eeqsn
In order to restrict the terms of $S^\ell$ to the only factors that appear in $\Sigma$ we must
impose the condition
\beqs
\label{gammacgamma}
\sum_{\gamma\in J_\kappa}\gamma c_\gamma=\gamma_1+\dots+\gamma_\ell=\kappa.
\eeqs
Therefore
\beqs
\label{sigma2}
\Sigma=\sum_{\substack{c_\gamma\in\N_0\\\sum_{\gamma\in J_\kappa}c_\gamma=\ell
\\\sum_{\gamma\in J_\kappa}\gamma c_\gamma=\kappa}}
\frac{\ell!}{\prod_{\gamma\in J_\kappa}c_\gamma!}
\prod_{\gamma\in J_\kappa}A_\gamma^{c_\gamma}.
\eeqs
Since condition \eqref{gammacgamma} implies $\kappa\geq\gamma$, substituing 
\eqref{sigma2} in the definition of $\Sigma$ we can write
\beqs
\label{A28}
\ell!
\sum_{\substack{c_\gamma\in\N_0\\\sum_{|\gamma|>0}c_\gamma=\ell\\\sum_{|\gamma|>0}\gamma c_\gamma=\kappa}}\prod_{|\gamma|>0}\frac{1}{c_\gamma!}A_\gamma^{c_\gamma}
=\sum_{\substack{\gamma_1+\dots+\gamma_\ell=\kappa\\|\gamma_i|>0}}
A_{\gamma_1}\dots A_{\gamma_\ell}.
\eeqs
Choosing $A_\gamma=1$ for every $\gamma$ in \eqref{A28} we finally have that
\beqs
\nonumber
&&\ell!\sum_{\substack{c_\gamma\in\N_0\\\sum_{|\gamma|>0}c_\gamma=\ell\\\sum_{|\gamma|>0}\gamma c_\gamma=\kappa}}\prod_{|\gamma|>0}\frac{1}{c_\gamma!}
=\sum_{\substack{\gamma_1+\dots+\gamma_\ell=\kappa\\|\gamma_i|>0}}1\\
\nonumber
\leq&&\#\{(\gamma_1,\dots,\gamma_\ell): \gamma_i\in\N_0^d,\gamma_1+\dots+\gamma_\ell=
\kappa\}\\
\nonumber
=&&\prod_{j=1}^d
\#\{((\gamma_1)_j,\dots,(\gamma_\ell)_j)\in\N_0^\ell: (\gamma_1)_j+\dots+(\gamma_\ell)_j=
\kappa_j\}\\
\label{multinomialcoefficients}
=&&\prod_{j=1}^d
\binom{\kappa_j+\ell-1}{\ell-1}
\leq\prod_{j=1}^d2^{\kappa_j+\ell-1}=2^{|\kappa|+d\ell-d},
\eeqs
by using in \eqref{multinomialcoefficients}
the formula for the 
number of $\ell$-tuples of non-negative integers whose sum is $\kappa_j$.
Therefore \eqref{combinatoria} is proved.

From \eqref{R3-2} and \eqref{combinatoria} we get
\beqsn
\left|D^\kappa\frac{1}{G(t)}\right|
\leq 2^{|\kappa|} A'_\lambda\, e^{\lambda\varphi^*\left(\frac{|\kappa|}{\lambda}\right)} \sum_{1\leq\ell\leq|\kappa|}
2^{|\kappa|+d\ell-d}\leq (8\cdot 2^d)^{|\kappa|} 2^{-d} A'_\lambda \, e^{\lambda\varphi^*\left(\frac{|\kappa|}{\lambda}\right)}.
\eeqsn
Finally, applying \cite[Lemma~A.1 (iv)]{BJO-realPW} we obtain that for every $\lambda>0$
there exists $C_\lambda>0$ such that
\beqsn
\left|D^\kappa\frac{1}{G(t)}\right|
\leq C_\lambda e^{\lambda\varphi^*\left(\frac{|\kappa|}{\lambda}\right)}.
\eeqsn

This proves \eqref{18}, hence $1/G\in\calO_{M,\omega}(\R^d)$ and $\tilde{g}_{m,n}\in\Sch_\omega(\R^d)$.
\end{proof}

We shall also need the following
\begin{Prop}
\label{PropDavid}
Let $g\in\Sch_\omega(\R^d)$. Then the multiplication operator
\beqs
\label{52}
M_g:\ \calO_{M,\omega}(\R^d)&&\longmapsto\Sch_\omega(\R^d)\\
\nonumber
F&&\longmapsto g\cdot F
\eeqs
is continuous.
\end{Prop}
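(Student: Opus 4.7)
The plan is to bound, for every $\lambda,\mu>0$, the seminorm $q_{\lambda,\mu}(gF)$ by the product of a continuous seminorm of $g$ in $\Sch_\omega(\R^d)$ and a quantity that is a continuous seminorm of $F$ on $\calO_{M,\omega}(\R^d)$, with parameters depending only on $\lambda,\mu,d$. Since the family $\{q_{\lambda,\mu}\}$ is a fundamental system of seminorms on $\Sch_\omega(\R^d)$, this will yield the continuity of $M_g$ at once.

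First I would apply the Leibniz rule
$$D^\alpha(gF)(x)=\sum_{\beta\leq\alpha}\binom{\alpha}{\beta}D^\beta g(x)\,D^{\alpha-\beta}F(x).$$
For the fixed $\lambda$, the assumption $F\in\calO_{M,\omega}(\R^d)$ provides constants $C_\lambda,\mu_\lambda>0$ with $|D^\gamma F(x)|\leq C_\lambda\,e^{\lambda\varphi^*(|\gamma|/\lambda)}\,e^{\mu_\lambda\omega(x)}$, while for $g\in\Sch_\omega(\R^d)$ I use $|D^\beta g(x)|\leq q_{\lambda,\mu+\mu_\lambda}(g)\,e^{\lambda\varphi^*(|\beta|/\lambda)}\,e^{-(\mu+\mu_\lambda)\omega(x)}$. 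Multiplying, the weight $e^{\mu_\lambda\omega(x)}$ coming from $F$ is absorbed and an overall factor $e^{-\mu\omega(x)}$ survives.

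The central step is to collapse the product $e^{\lambda\varphi^*(|\beta|/\lambda)+\lambda\varphi^*(|\alpha-\beta|/\lambda)}$ into a single exponential of the form $e^{\lambda'\varphi^*(|\alpha|/\lambda')}$ by the convexity of $\varphi^*$, as in \cite[Lemma~A.1]{BJO-realPW}, which is the same device already employed in the proof of Proposition~\ref{lemma3}. Summing over $\beta\leq\alpha$, the binomial coefficients contribute a factor at most $2^{|\alpha|}$ and the number of admissible $\beta$ is at most $(|\alpha|+1)^d$; both are absorbed by a further mild enlargement of $\lambda$ into some $\lambda'=\lambda'(\lambda,d)$, again through the standard properties of $\varphi^*$ recalled in that lemma.

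Putting everything together I expect an estimate of the form $q_{\lambda',\mu}(gF)\leq C\, C_\lambda\, q_{\lambda,\mu+\mu_\lambda}(g)$, from which the continuity of $M_g$ follows at once. The main obstacle is purely combinatorial bookkeeping: keeping track of how $\lambda$ degrades into $\lambda'$ while fusing the Leibniz terms through the convexity inequalities of $\varphi^*$ and reabsorbing the polynomial and binomial factors, but each individual manipulation is already available from the auxiliary estimates invoked earlier in the paper.
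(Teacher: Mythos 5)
Your route is genuinely different from the paper's: the printed proof is a two-line soft argument (by \cite[Theorem~5.4~(i)]{DN} the space $\calO_{M,\omega}(\R^d)$ is ultrabornological, and since $\Sch_\omega(\R^d)$ is a webbed Fr\'echet space, De~Wilde's closed graph theorem applies), whereas you attempt a direct seminorm estimate. Your Leibniz computation and the fusion of the two factors $e^{\lambda\varphi^*(|\beta|/\lambda)}e^{\lambda\varphi^*(|\alpha-\beta|/\lambda)}$ via convexity of $\varphi^*$, together with the absorption of $2^{|\alpha|}$ by enlarging $\lambda$, are all standard and correct; they do prove that $M_g$ is well defined, i.e.\ that $gF\in\Sch_\omega(\R^d)$ whenever $F\in\calO_{M,\omega}(\R^d)$.

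The gap is in the final step ``continuity follows at once.'' The space $\calO_{M,\omega}(\R^d)$ is not a Fr\'echet space described by the seminorms you implicitly use: its defining condition reads ``for all $\lambda>0$ \emph{there exist} $C_\lambda,\mu_\lambda>0$,'' so it is a (PLB)-space, a projective limit over $\lambda$ of inductive limits over $\mu$. The quantities entering your final bound $q_{\lambda',\mu}(gF)\leq C\,C_\lambda\,q_{\lambda,\mu+\mu_\lambda}(g)$, namely $C_\lambda=C_\lambda(F)$ and the exponent $\mu_\lambda=\mu_\lambda(F)$, both depend on $F$; the map $F\mapsto C_\lambda(F)$ is finite only on one step of the inductive limit and is not a continuous seminorm on $\calO_{M,\omega}(\R^d)$, so the estimate does not directly compare $q_{\lambda',\mu}\circ M_g$ with a continuous seminorm of the domain. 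What your computation does yield (after checking that $C_\lambda$ and $\mu_\lambda$ can be chosen uniformly on bounded sets) is that $M_g$ maps bounded sets to bounded sets; to upgrade boundedness to continuity you must still know that $\calO_{M,\omega}(\R^d)$ is bornological, which is exactly the nontrivial input from \cite{DN} that the paper's proof invokes. So either quote that result to finish, or note that boundedness alone already suffices for the only application made later (in the proof of Theorem~\ref{th2} the proposition is used precisely to send the bounded family $\{G_n\}$ to a bounded family $\{g/G_n\}$).
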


\begin{proof}
 By \cite[Theorem~5.4 (i)]{DN}, the space $\calO_{M,\omega}(\R^d)$ is ultrabornological, so the result follows from De Wilde closed graph theorem.
\end{proof}

We are now ready to prove stability of the Gabor $\omega$-wave front set by nonstationary perturbations of Gabor frames.

We recall, from \cite{BJO-Gabor}:
\begin{Def}
\label{defWF'}
Let $u\in \Sch'_\omega(\R^d)$ and $\varphi\in\Sch_\omega(\R^d)\setminus\{0\}$.
We say that $z_0\in\R^{2d}\setminus\{0\}$ is not in the $\omega$-wave front set 
$\WF'_\omega(u)$ of $u$ if there exists an open conic set 
$\Gamma\subseteq\R^{2d}\setminus\{0\}$ containing $z_0$ such that
\beqs
\label{13}
\sup_{z\in\Gamma}e^{\lambda\omega(z)}|V_\varphi u(z)|<+\infty,\qquad\forall\lambda>0.
\eeqs
\end{Def}
The $\omega$-wave front set is independent of the choice of the window function $\varphi$ by
\cite[Prop.~3.2]{BJO-Gabor}.
Let us now prove that it coincides with $\WF^{\{g_{m,n}\}}_\omega(u)$:
\begin{Th}
\label{th2}
Let $g\in\Sch_\omega(\R^d)\setminus\{0\}$ with
$\supp g\subseteq[A_1,B_1]\times\cdots\times[A_d,B_d]$. 
Set
 $g_n(t)=g(t-\alpha n)$ and $g_{m,n}(t)=e^{i\beta_n m\cdot t}g(t-\alpha n)$ for
$\alpha,\beta_n>0$ satisfying \eqref{7}, \eqref{10} and \eqref{11}.

Then, for $u\in\Sch'_\omega(\R^d)$,
\beqsn
\WF'_\omega(u)=\WF^{\{g_{m,n}\}}_\omega(u).
\eeqsn
\end{Th}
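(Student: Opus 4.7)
My plan is to exploit the identity $\langle u, g_{m,n}\rangle = V_g u(\alpha n, \beta_n m)$, which realizes the frame coefficients as samples of the short-time Fourier transform with window $g$. For the inclusion $\WF'_\omega(u) \supseteq \WF^{\{g_{m,n}\}}_\omega(u)$, I would assume $z_0 \notin \WF'_\omega(u)$, so that $V_\varphi u$ decays rapidly on some open conic neighborhood $\Gamma$ of $z_0$. By the window-independence of the STFT $\omega$-wave front set (cf.~\cite{BJO-Gabor}), $V_g u$ also decays rapidly on a conic neighborhood $\Gamma' \ni z_0$, and restricting to the sample points $(\alpha n, \beta_n m) \in \Gamma'$ immediately yields $z_0 \notin \WF^{\{g_{m,n}\}}_\omega(u)$.

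For the reverse inclusion, suppose $z_0 \notin \WF^{\{g_{m,n}\}}_\omega(u)$ with rapid decay on $\Gamma$. I would invoke the frame reconstruction $u = \sum_{m,n} \langle u, g_{m,n}\rangle \tilde{g}_{m,n}$, whose validity on $\Sch'_\omega(\R^d)$ follows from \eqref{S-Sprime-mod} combined with Proposition~\ref{lemma3} (which guarantees $\tilde{g}_{m,n}\in\Sch_\omega(\R^d)$). Applying $V_\varphi$ to the reconstruction gives
\[
V_\varphi u(z) = \sum_{m,n}\langle u, g_{m,n}\rangle\, V_\varphi \tilde{g}_{m,n}(z),
\]
and the crux of the argument is the uniform decay estimate
\[
|V_\varphi \tilde{g}_{m,n}(z)| \leq C_\lambda\, e^{-\lambda\omega(z-(\alpha n,\beta_n m))},\qquad\forall\lambda>0.
\]
To obtain it, I write $\tilde{g}_{m,n} = \Pi(\alpha n,\beta_n m)\, h_n$ with $h_n(s):=g(s)/G(s+\alpha n)$, and prove that $\{h_n\}$ is a bounded family in $\Sch_\omega(\R^d)$: all $h_n$ share the fixed compact support of $g$, and the proof of Proposition~\ref{lemma3} actually produces derivative bounds on $1/G$ that are independent of $t$ (the final estimate there carries no $e^{\mu_\lambda\omega(t)}$ factor), so the same uniform bounds hold for each translate $(1/G)(\cdot+\alpha n)$ and Leibniz does the rest. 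Covariance of the STFT under phase-space shifts then delivers the displayed uniform estimate.

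To finish, I split the series according to whether $(\alpha n,\beta_n m)\in\Gamma$ or not. On the first set, the arbitrarily rapid decay of $|\langle u,g_{m,n}\rangle|$ supplied by the hypothesis, combined with the uniform STFT-estimate and subadditivity $\omega(z)\leq\omega((\alpha n,\beta_n m))+\omega(z-(\alpha n,\beta_n m))$, gives the desired rapid decay in $z$. On the complement, I use the moderate-growth bound $|\langle u,g_{m,n}\rangle|\leq C e^{\mu\omega(\alpha n,\beta_n m)}$ for some $\mu>0$ (from $u\in\Sch'_\omega(\R^d)\subset M^\infty_{m_{-\mu}}(\R^d)$ via \eqref{S-Sprime-mod}), together with the cone-geometry estimate $|z-w|\geq c(|z|+|w|)$ for $z\in\Gamma'\Subset\Gamma$ and $w\notin\Gamma$, and a doubling-type inequality $\omega(cr)\geq\omega(r)/K$ obtained from iterated subadditivity, in order to absorb the growth in $(m,n)$; the residual geometric series $\sum e^{-\eta\omega(\alpha n,\beta_n m)}$ converges for $\eta$ large thanks to property $(\gamma)$ and the uniform lower bound on $\beta_n$ coming from \eqref{12}. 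The main obstacle is proving the uniform boundedness of $\{h_n\}$ in $\Sch_\omega(\R^d)$, because $1/G$ is not itself a lattice-translate of a single function; this is resolved precisely by the $t$-uniform character of the estimate in Proposition~\ref{lemma3}, after which the cone-splitting argument is standard but requires careful bookkeeping with the subadditive weight $\omega$.
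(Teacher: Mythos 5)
Your proof is correct and follows essentially the same route as the paper: the same splitting of the frame expansion according to whether $(\alpha n,\beta_n m)$ lies in $\Gamma$, the same key fact that the dual windows $\tilde g_{m,n}=\Pi(\alpha n,\beta_n m)h_n$ with $h_n=g\cdot(1/G)(\cdot+\alpha n)$ form a bounded family in $\Sch_\omega(\R^d)$ (resting on the $t$-uniform derivative bounds for $1/G$ from Proposition~\ref{lemma3}), and the same cone-separation estimate for the part outside $\Gamma$. The only difference is that you estimate the $\Gamma$-part directly on the STFT side via the uniform decay of $V_\varphi\tilde g_{m,n}(z)$ in $z-(\alpha n,\beta_n m)$, whereas the paper first proves $u_1\in\Sch_\omega(\R^d)$ by seminorm estimates in physical space and then invokes \cite[Thm.~2.7]{GZ}; this is a mild streamlining of the same argument rather than a different method.
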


\begin{proof}
Let us first remark that
\beqsn
\langle u,g_{m,n}\rangle=V_gu(\alpha n,\beta_n m).
\eeqsn
Then clearly $\WF^{\{g_{m,n}\}}_\omega(u)\subseteq \WF'_\omega(u)$.

We have to prove the opposite inequality.
Let $0\neq z_0\notin\WF^{\{g_{m,n}\}}_\omega(u)$ and prove that $z_0\notin \WF'_\omega(u)$.
From Definition~\ref{y-sigmaWF} there exists an open conic set 
$\Gamma\subseteq\R^{2d}\setminus\{0\}$ containing $z_0$ such that 
\beqs
\label{16}
\sup_{\afrac{m,n\in\Z^d}{(\alpha n,\beta_n m)\in\Gamma}}
e^{\lambda\omega(\alpha n,\beta_n m)}|\langle u,g_{m,n}\rangle|<+\infty,
\qquad\forall\lambda>0.
\eeqs

From Proposition~\ref{lemma2}
\beqsn
\langle u,\psi\rangle=\sum_{m,n\in\Z^d}\langle u,g_{m,n}\rangle\langle\tilde{g}_{m,n},\psi\rangle
=\sum_{m,n\in\Z^d}V_gu(\alpha n,\beta_n m)\langle\tilde{g}_{m,n},\psi\rangle,
\qquad\forall\psi\in\Sch_\omega(\R^d),
\eeqsn
where $\tilde{g}_{m,n}$ is the canonical dual window of $g_{m,n}$ defined in \eqref{14}.

The idea is now to argue similarly as in \cite[Thm.~3.17]{BJO-Gabor}.
We denote
\beqsn
&&u_1:=\sum_{\afrac{m,n\in\Z^d}{(\alpha n,\beta_n m)\in\Gamma}}
V_gu(\alpha n,\beta_n m)\tilde{g}_{m,n}\\
&&u_2:=\sum_{\afrac{m,n\in\Z^d}{(\alpha n,\beta_n m)\notin\Gamma}}
V_gu(\alpha n,\beta_n m)\tilde{g}_{m,n}.
\eeqsn
Clearly $V_gu=V_gu_1+V_gu_2$ and we shall thus estimate $|V_gu_1|$ and $|V_gu_2|$
separately.
For every $\gamma,\delta\in\N_0^d$ and $\lambda,\mu>0$:
\beqs
\nonumber
&&e^{-\lambda\varphi^*\left(\frac{|\gamma|}{\lambda}\right)}
e^{-\mu\varphi^*\left(\frac{|\delta|}{\mu}\right)}|t^\delta\partial^\gamma u_1(t)|\\
\nonumber
\leq&&
\sum_{\afrac{m,n\in\Z^d}{\sigma=(\alpha n,\beta_n m)\in\Gamma}}
|V_gu(\sigma)|\cdot|t^\delta\partial^\gamma\tilde{g}_{m,n}(t)|
e^{-\lambda\varphi^*\left(\frac{|\gamma|}{\lambda}\right)}
e^{-\mu\varphi^*\left(\frac{|\delta|}{\mu}\right)}\\
\label{31}
\leq&&
\sum_{\afrac{m,n\in\Z^d}{\sigma=(\alpha n,\beta_n m)\in\Gamma}}
|V_gu(\sigma)|\cdot\left|t^\delta\partial^\gamma\left(\frac{g(t-\alpha n)}{G(t)}e^{i\beta_n m\cdot t}\right)\right|
e^{-\lambda\varphi^*\left(\frac{|\gamma|}{\lambda}\right)}
e^{-\mu\varphi^*\left(\frac{|\delta|}{\mu}\right)}
\eeqs
where $G(t)$ is defined in \eqref{30} and $1/G(t)\in\calO_{M,\omega}(\R^d)$ by
Proposition~\ref{lemma3}, i.e. \eqref{18} is satisfied.

We now define $\langle y\rangle:=\sqrt{1+|y|^2}$ for every $y\in\R^d$; we then have, from \eqref{31}:
\beqs
\nonumber
&&e^{-\lambda\varphi^*\left(\frac{|\gamma|}{\lambda}\right)}
e^{-\mu\varphi^*\left(\frac{|\delta|}{\mu}\right)}|t^\delta\partial^\gamma u_1(t)|\\
\nonumber
\leq&&
\sum_{\afrac{m,n\in\Z^d}{\sigma=(\alpha n,\beta_n m)\in\Gamma}}
|V_gu(\sigma)|\cdot|t|^{|\delta|}\sum_{\iota\leq\gamma}\binom{\gamma}{\iota}
\langle\beta_n m\rangle^{|\gamma-\iota|}
\left|\partial^\iota\frac{g(t-\alpha n)}{G(t)}\right|
e^{-\lambda\varphi^*\left(\frac{|\gamma|}{\lambda}\right)}
e^{-\mu\varphi^*\left(\frac{|\delta|}{\mu}\right)}\\
\nonumber
\leq&&\sum_{\afrac{m,n\in\Z^d}{\sigma=(\alpha n,\beta_n m)\in\Gamma}}
|V_gu(\sigma)|\sum_{\iota\leq\gamma}\binom{\gamma}{\iota}
\sum_{\kappa\leq\iota}\binom{\iota}{\kappa}2^{-|\gamma|}|t|^{|\delta|}
e^{-\mu\varphi^*\left(\frac{|\delta|}{\mu}\right)}
\langle\beta_n m\rangle^{|\gamma-\iota|}\\
\label{32}
&&\cdot
\left|\partial^\kappa\frac{1}{G(t)}\right|\cdot
\left|\partial^{\iota-\kappa}g(t-\alpha n)\right| 2^{|\gamma|}
e^{-\lambda\varphi^*\left(\frac{|\gamma|}{\lambda}\right)}.
\eeqs

We now recall (see, for instance, \cite[(2.12),(2.1)]{BJO-Gabor}):
\beqs
\label{33}
&&\forall\mu>0\ \exists C_\mu>0:\quad|t|^{|\delta|}
e^{-\mu\varphi^*\left(\frac{|\delta|}{\mu}\right)}
\leq C_\mu e^{\mu\omega(t)}\\
\label{34}
&&\forall\lambda>0\quad 2^{|\gamma|}
e^{-\lambda\varphi^*\left(\frac{|\gamma|}{\lambda}\right)}
\leq e^{-3\lambda\varphi^*\left(\frac{|\gamma|}{3\lambda}\right)}.
\eeqs
Moreover, from \eqref{18} we have that for all $\lambda'>0$ there exist $C_{\lambda'},\mu_{\lambda'}>0$ such that
\beqs
\label{35}
\left|D^\kappa\frac{1}{G(t)}\right|\leq C_{\lambda'}
e^{\lambda'\varphi^*\left(\frac{|\kappa|}{\lambda'}\right)}
e^{\mu_{\lambda'}\omega(t)},
\eeqs
and then, being $g\in\Sch_\omega(\R^d)$, from \cite[Thm.~2.4]{BJO-realPW} and
the subadditivity of $\omega$ we have that for all $\lambda',\mu>0$ there exists $\mu_{\lambda'}>0$ as in \eqref{35} and  $C_{\lambda',\mu}>0$ such that
\beqs
\nonumber
|\partial^{\iota-\kappa}g(t-\alpha n)|e^{(\mu+\mu_{\lambda'})\omega(t)}
\nonumber
\leq&&
|\partial^{\iota-\kappa}g(t-\alpha n)|e^{(\mu+\mu_{\lambda'})\omega(t-\alpha n)}
e^{(\mu+\mu_{\lambda'})\omega(\alpha n)}\\
\label{36}
\leq&&C_{\lambda',\mu}e^{\lambda'\varphi^*\left(\frac{|\iota-\kappa|}{\lambda'}\right)}
e^{(\mu+\mu_{\lambda'})\omega(\alpha n)}.
\eeqs
Substituting \eqref{33}-\eqref{36} into \eqref{32} we get:
\beqs
\nonumber
&&e^{-\lambda\varphi^*\left(\frac{|\gamma|}{\lambda}\right)}
e^{-\mu\varphi^*\left(\frac{|\delta|}{\mu}\right)}|t^\delta\partial^\gamma u_1(t)|\\
\nonumber
\leq&&\sum_{\afrac{m,n\in\Z^d}{\sigma=(\alpha n,\beta_n m)\in\Gamma}}
|V_gu(\sigma)|\sum_{\iota\leq\gamma}\binom{\gamma}{\iota}
\sum_{\kappa\leq\iota}\binom{\iota}{\kappa}2^{-|\gamma|}C_\mu
\langle\beta_n m\rangle^{|\gamma-\iota|}\\
\nonumber
&&\cdot C_{\lambda'}e^{\lambda'\varphi^*\left(\frac{|\kappa|}{\lambda'}\right)}C_{\lambda',\mu}
e^{\lambda'\varphi^*\left(\frac{|\iota-\kappa|}{\lambda'}\right)}
e^{(\mu+\mu_{\lambda'})\omega(\alpha n)}
e^{-3\lambda\varphi^*\left(\frac{|\gamma|}{3\lambda}\right)}\\
\nonumber
\leq&&C'_{\lambda',\mu}
\sum_{\afrac{m,n\in\Z^d}{\sigma=(\alpha n,\beta_n m)\in\Gamma}}
|V_gu(\sigma)|e^{(\mu+\mu_{\lambda'})\omega(\alpha n)}
\sum_{\iota\leq\gamma}\binom{\gamma}{\iota}
\sum_{\kappa\leq\iota}\binom{\iota}{\kappa}2^{-|\gamma|}2^{-|\iota|}\\
\label{37}
&&\cdot 2^{|\iota|}
e^{\lambda'\varphi^*\left(\frac{|\iota|}{\lambda'}\right)}
e^{-3\lambda\varphi^*\left(\frac{|\gamma|}{3\lambda}\right)}
\langle\beta_n m\rangle^{|\gamma-\iota|},
\eeqs
for some $C'_{\lambda',\mu}>0$.
From \cite[Lemma~A.1 (iv)]{BJO-realPW}
\beqsn
2^{|\iota|}
e^{\lambda'\varphi^*\left(\frac{|\iota|}{\lambda'}\right)}
\leq e^{\frac{\lambda'}{3}\varphi^*\left(\frac{|\iota|}{\lambda'/3}\right)},
\eeqsn
which can be substituted in \eqref{37} to get, for some $C_{\mu,\lambda'}>0$:
\beqs
\nonumber
&&e^{-\lambda\varphi^*\left(\frac{|\gamma|}{\lambda}\right)}
e^{-\mu\varphi^*\left(\frac{|\delta|}{\mu}\right)}|t^\delta\partial^\gamma u_1(t)|\\
\leq&&C_{\mu,\lambda'}
\sum_{\afrac{m,n\in\Z^d}{\sigma=(\alpha n,\beta_n m)\in\Gamma}}
|V_gu(\sigma)|e^{(\mu+\mu_{\lambda'})\omega(\alpha n)}
\sum_{\iota\leq\gamma}\binom{\gamma}{\iota}
\sum_{\kappa\leq\iota}\binom{\iota}{\kappa}2^{-|\gamma|}2^{-|\iota|}\\
\label{40}
&&\cdot e^{\frac{\lambda'}{3}\varphi^*\left(\frac{|\iota|}{\lambda'/3}\right)}
e^{-3\lambda\varphi^*\left(\frac{|\gamma|}{3\lambda}\right)}
\langle\beta_n m\rangle^{|\gamma-\iota|}.
\eeqs

For $\lambda'=18\lambda$ so that $\lambda'/3=6\lambda$, from 
\cite[Lemma~A.1 (ix)]{BJO-realPW} we have that
\beqsn
e^{\frac{\lambda'}{3}\varphi^*\left(\frac{|\iota|}{\lambda'/3}\right)}
e^{-3\lambda\varphi^*\left(\frac{|\gamma|}{3\lambda}\right)}
\leq e^{-6\lambda\varphi^*\left(\frac{|\gamma-\iota|}{6\lambda}\right)}
\eeqsn
which gives, together with $\langle\beta_n m\rangle^{|\gamma-\iota|}$, by \eqref{33}:
\beqsn
e^{\frac{\lambda'}{3}\varphi^*\left(\frac{|\iota|}{\lambda'/3}\right)}
e^{-3\lambda\varphi^*\left(\frac{|\gamma|}{3\lambda}\right)}
\langle\beta_n m\rangle^{|\gamma-\iota|}\leq C_{6\lambda} e^{6\lambda\omega(\langle\beta_n m\rangle)}.
\eeqsn
Substituting in \eqref{40}, and using \eqref{16} for $\langle u,g_{m,n}\rangle=V_gu(\alpha n,\beta_n m)$, we have that, for some constants depending on $\mu$ and $\lambda$
(with $\mu'_\lambda:=\mu_{\lambda'}=\mu_{18\lambda}$) and for $M>0$ big enough,
\beqsn
&&e^{-\lambda\varphi^*\left(\frac{|\gamma|}{\lambda}\right)}
e^{-\mu\varphi^*\left(\frac{|\delta|}{\mu}\right)}|t^\delta\partial^\gamma u_1(t)|\\
\leq&&\tilde{C}_{\mu,\lambda}
\sum_{\afrac{m,n\in\Z^d}{\sigma=(\alpha n,\beta_n m)\in\Gamma}}
|V_gu(\sigma)|e^{(\mu+\mu'_\lambda)\omega(\alpha n)}
\sum_{\iota\leq\gamma}\binom{\gamma}{\iota}
\sum_{\kappa\leq\iota}\binom{\iota}{\kappa}2^{-|\gamma|}2^{-|\iota|}
e^{6\lambda\omega(\langle\beta_n m\rangle)}\\
\leq&&\tilde{C}_{\mu,\lambda}
\sum_{\afrac{m,n\in\Z^d}{\sigma=(\alpha n,\beta_n m)\in\Gamma}}
|V_gu(\sigma)|e^{(\mu+\mu'_\lambda+6\lambda+M)\omega(\sigma)}
e^{-M\omega(\sigma)}\\
\leq&&\tilde{C}'_{\mu,\lambda}\sum_{m,n\in\Z^d}e^{-M\omega(\alpha n,\beta_n m)}
\leq \tilde{C}''_{\mu,\lambda}\,,
\eeqsn
since assumpiton $(\gamma)$ on $\omega$ implies that the series
\beqsn
\sum_{m,n\in\Z^d}e^{-M\omega(\alpha n,\beta_n m)}
\leq&&\sum_{m,n\in\Z^d}e^{-aM}e^{-bM\log(1+|(\alpha n,\beta_n m)|)}\\
\leq&&e^{-aM}\sum_{m,n\in\Z^d}\frac{1}{(1+|\alpha n|^2+
|\beta_n m|^2)^{\frac{bM}{2}}}
\eeqsn 
converges for large $M$, considering that $\{\beta_n\}_{n\in\N}$ is bounded by
\eqref{7} and \eqref{12}.

We have thus proved that $u_1\in\Sch_\omega(\R^d)$. Then, by
\cite[Thm.~2.7]{GZ}, also $V_g u_1\in\Sch_\omega(\R^d)$ and
\beqs
\label{324Gabor}
\forall\lambda>0\ \exists C_\lambda>0:\quad
e^{\lambda\omega(z)}|V_g u_1(z)|\leq C_\lambda\qquad\forall z\in\R^{2d}.
\eeqs

In order to estimate $|V_g u_2(z)|$, let us now fix an open conic set $\Gamma'\subseteq\R^{2d}\setminus\{0\}$
containing $z_0$ such that $\overline{\Gamma'\cap S_{2d-1}}\subseteq\Gamma$,
where $S_{2d-1}$ is the unit sphere in $\R^{2d}$.
Then
\beqsn
\inf_{\substack{m,n\in\Z^d\\0\neq\sigma=(\alpha n,\beta_n m)\in\R^{2d}\setminus\Gamma\\
z\in\Gamma'}}\left|\frac{\sigma}{|\sigma|}-z\right|
=\varepsilon>0
\eeqsn
and $|\sigma-z|\geq\varepsilon|\sigma|$ for $0\neq\sigma=(\alpha n,\beta_n m)\in\R^{2d}\setminus\Gamma$, $m,n\in\Z^d$, $z\in\Gamma'$.

Since $u\in\Sch'_\omega(\R^d)$ and $g\in\Sch_\omega(\R^d)$, from \cite[Thm.~2.4]{GZ} there
exist constants $\bar c,\bar\lambda>0$ such that
\beqsn
|V_g u(\zeta)|\leq\bar ce^{\bar\lambda\omega(\zeta)}\qquad\forall\zeta\in\R^{2d}.
\eeqsn
Then, for all $\lambda>0$, by the subadditivity of $\omega$ we have, for $z=(x,\xi)$:
\beqs
\nonumber
e^{\lambda\omega(z)}|V_gu_2(z)|\leq&&
\sum_{\afrac{m,n\in\Z^d}{\sigma=(\alpha n,\beta_n m)\notin\Gamma}}
e^{\lambda\omega(\sigma)}e^{\lambda\omega(z-\sigma)}
|V_gu(\alpha n,\beta_n m)|\cdot|\langle\tilde{g}_{m,n},\Pi(z)g\rangle|\\
\label{41}
\leq&&\bar c
\sum_{\afrac{m,n\in\Z^d}{\sigma=(\alpha n,\beta_n m)\notin\Gamma}}
e^{(\lambda+\bar\lambda)\omega(\sigma)}e^{\lambda\omega(z-\sigma)}
\Big|\Big\langle\frac{g(t-\alpha n)}{\sum_{\ell\in\Z^d}\frac{1}{\beta_\ell^d}|g_\ell(t)|^2}
e^{i\beta_n m\cdot t},e^{it\cdot\xi}g(t-x)\Big\rangle\Big|.
\eeqs

By the change of variables $\tau=t-\alpha n$ we can write
\beqs
\nonumber
&&\Big|\Big\langle\frac{g(t-\alpha n)}{\sum_{\ell\in\Z^d}\frac{1}{\beta_\ell^d}|g_\ell(t)|^2}
e^{i\beta_n m\cdot t},e^{it\cdot\xi}g(t-x)\Big\rangle\Big|\\
\nonumber
=&&\Big|\Big\langle\frac{g(\tau)}{\sum_{\ell\in\Z^d}\frac{1}{\beta_\ell^d}|g(\tau-\alpha(\ell-n))|^2}
e^{i\beta_n m\cdot (\tau+\alpha n)},e^{i(\tau+\alpha n)\cdot\xi}g(\tau+\alpha n-x)\Big\rangle\Big|\\
\nonumber
=&&\Big|\Big\langle\frac{g(\tau)}{\sum_{\ell\in\Z^d}\frac{1}{\beta_{\ell+n}^d}|g(\tau-\alpha\ell)|^2},
e^{i\tau\cdot(\xi-\beta_n m)}g(\tau-(x-\alpha n)\Big\rangle\Big|\\
\nonumber
=&&\Big|\Big\langle\frac{g(\tau)}{\sum_{\ell\in\Z^d}\frac{1}{\beta_{\ell+n}^d}|g_\ell(\tau)|^2},
\Pi(x-\alpha n,\xi-\beta_n m)g(\tau)\Big\rangle\Big|\\
\label{42}
=&&\Big|V_g\Big(\frac{g(\tau)}{\sum_{\ell\in\Z^d}\frac{1}{\beta_{\ell+n}^d}|g_\ell(\tau)|^2}\Big)
(x-\alpha n,\xi-\beta_n m)\Big|\\
\nonumber
=&&\Big|V_g\Big(\frac{g(\tau)}{\sum_{\ell\in\Z^d}\frac{1}{\beta_{\ell+n}^d}|g_\ell(\tau)|^2}\Big)
(z-\sigma)\Big|.
\eeqs

Let us now set
\beqsn
G_n(\tau)=\frac{1}{\sum_{\ell\in\Z^d}\frac{1}{\beta_{\ell+n}^d}|g_\ell(\tau)|^2}.
\eeqsn
From \eqref{7} and \eqref{12} the sequence $\{\beta_{\ell+n}\}_{\ell\in\Z^d}$ is uniformly bounded and, as in \eqref{23}, $\{G_n\}_{n\in\Z^d}$ is bounded in $\calO_{M,\omega}(\R^d)$.
From Proposition~\ref{PropDavid} the multiplication operator $M_g$ is continuous, hence 
sends bounded sets into bounded sets. It follows that $\{g/G_n\}_{n\in\Z^d}$ is bounded in
$\Sch_\omega(\R^d)$ and therefore, from \cite[Thm.~2.7]{GZ},
\beqs
\label{43}
\forall\mu>0\ \exists C_\mu>0:\quad
e^{\mu\omega(z-\sigma)}\left|V_g\left(\frac{g}{G_n}\right)(z-\sigma)\right|\leq C_\mu,\qquad\forall n\in\Z^d.
\eeqs

Applying the estimate \eqref{43} in \eqref{42} and then in \eqref{41}, we have that for all $\mu>0$ there exists $C_\mu>0$ such that
\beqs
\label{328Gabor}
e^{\lambda\omega(z)}|V_g u_2(z)|
\leq C_\mu
\sum_{\afrac{m,n\in\Z^d}{\sigma=(\alpha n,\beta_n m)\notin\Gamma}}
e^{(\lambda+\bar\lambda)\omega(\sigma)}e^{(\lambda-\mu)\omega(z-\sigma)}.
\eeqs

Taking into account that for $z\in\Gamma'$ and $\sigma=(\alpha n,\beta_n m)\in\R^{2d}\setminus\Gamma$ we have $|\sigma-z|\geq\varepsilon|\sigma|$, by the subadditivity of $\omega$ we have that
\beqsn
\omega(\sigma)\leq\omega\left(\frac{|\sigma-z|}{\varepsilon}\right)\leq
\left(\left[\frac1\varepsilon\right]+1\right)\omega(\sigma-z)
\eeqsn 
and hence choosing $\mu>\lambda$ sufficiently large we get from \eqref{328Gabor} that
\beqs
\label{329Gabor}
e^{\lambda\omega(z)}|V_g u_2(z)|
\leq C_\lambda
\sum_{\afrac{m,n\in\Z^d}{\sigma=(\alpha n,\beta_n m)\notin\Gamma}}
e^{(\lambda+\bar\lambda+([1/\varepsilon]+1)^{-1}(\lambda-\mu))\omega(\sigma)}
\leq C'_\lambda\qquad\forall z\in\Gamma'
\eeqs
for some $C_\lambda,C'_\lambda>0$.

From \eqref{324Gabor} and \eqref{329Gabor} we finally get that
\beqsn
\sup_{z\in\Gamma'}e^{\lambda\omega(z)}|V_gu(z)|<+\infty,\qquad\forall\lambda>0,
\eeqsn
and hence $z_0\notin\WF'_\omega(u)$.
\end{proof}

Being $\WF'_\omega(u)=\WF^G_\omega(u)$ by \cite[Thm.~3.17]{BJO-Gabor}, we finally have that the Gabor $\omega$-wave front set is stable by perturbing the Gabor frame 
$\{\Pi(\alpha n,\beta m)g\}_{m,n\in\Z^d}$ with $\{g_{m,n}\}_{m,n\in\Z^d}=\{\Pi(\alpha n,\beta_n m)g\}_{m,n\in\Z^d}$:

\begin{Cor}
\label{cor1}
For $g_{m,n}$ as in Theorem~\ref{th2}, if $u\in\Sch'_\omega(\R^d)$, then
\beqsn
\WF^G_\omega(u)=\WF^{\{g_{m,n}\}}_\omega(u).
\eeqsn
\end{Cor}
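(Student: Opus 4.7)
The plan is extremely short, because Corollary~\ref{cor1} is essentially a two-step chaining of results already available at this point in the paper. My first move is to invoke Theorem~\ref{th2}, whose hypotheses on $g$, $\alpha$, $\beta_n$ are exactly those being imposed in the statement of the corollary; this immediately gives
\[
\WF^{\{g_{m,n}\}}_\omega(u) = \WF'_\omega(u)
\]
for every $u \in \Sch'_\omega(\R^d)$. The remaining step is to identify the short-time Fourier transform wave front set $\WF'_\omega(u)$ from Definition~\ref{defWF'} with the Gabor frame wave front set $\WF^G_\omega(u)$ from Definition~\ref{def33Gabor}; this identification is exactly the content of \cite[Thm.~3.17]{BJO-Gabor}, which says that the two wave front sets coincide on $\Sch'_\omega(\R^d)$.

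Concatenating the two equalities yields the claim $\WF^G_\omega(u) = \WF^{\{g_{m,n}\}}_\omega(u)$. There is no real obstacle here, because all the analytic work (estimating $u_1$ in $\Sch_\omega$ and controlling $V_g u_2$ off-cone via the boundedness of $\{g/G_n\}$ and Proposition~\ref{PropDavid}) has already been done inside the proof of Theorem~\ref{th2}. The only thing worth double-checking is that the corollary's standing hypotheses on $\alpha$ and $\beta_n$ (compact support of $g$, and conditions \eqref{7}, \eqref{10}, \eqref{11}) are exactly what Theorem~\ref{th2} requires, and that the class of admissible $u$ in both referenced results is $\Sch'_\omega(\R^d)$, so that the chaining is legitimate without any further restriction.
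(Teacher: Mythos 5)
Your proof is correct and follows exactly the paper's route: the paper derives the corollary by combining Theorem~\ref{th2} with the identity $\WF'_\omega(u)=\WF^G_\omega(u)$ from \cite[Thm.~3.17]{BJO-Gabor}, precisely as you do. Your check that the hypotheses match and that both results apply to all $u\in\Sch'_\omega(\R^d)$ is the only thing needed, and you have it.
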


\begin{Rem}
\begin{em}
Since the Gabor $\omega$-wave front set is independent of the choice of the window function 
$g\in\Sch_\omega(\R^d)$, so is $\WF^{\{g_{m,n}\}}_\omega(u)$, which thus is independent also of the perturbation $\{\beta_n\}_{n\in\Z^d}$ of $\beta>0$, provided that $g$ satisfies the assumptions of Theorem~\ref{th2}.
\end{em}
\end{Rem}

Similar results may be obtained by adaptation over frequency, taking inspiration from \cite{Balazs}.
Let $h\in\Sch_\omega(\R^d)\setminus\{0\}$ with $\supp\hat h\subseteq[A_1,B_1]\times\dots[A_d,B_d]$. Set
$h_m(t)=e^{i\beta m\cdot t}h(t)$ and $h_{m,n}(t)=h_m(t-\alpha_m n)=e^{i\beta m\cdot(t-\alpha_m n)}
h(t-\alpha_m n)$ for $\alpha_m,\beta>0$ with
\beqs
\label{alpham}
\frac{1}{\alpha_m}\geq\max_{1\leq j\leq d}(B_j-A_j).
\eeqs
It will be useful to write
\beqs
\label{53}
h_{m,n}(t)=e^{-i\beta \alpha_m m\cdot n}\Pi(\alpha_m n,\beta m)h(t)
\eeqs
and recall, by the properties of the Fourier transform, that
\beqsn
&&\hat{h}_m(\xi)=\hat{h}(\xi-\beta m)\\
&&\hat{h}_{m,n}(\xi)=e^{-i\alpha_m n\cdot\xi}\hat{h}_m(\xi)=
e^{-i\alpha_m n\cdot\xi}\hat{h}(\xi-\beta m)=\Pi(\beta m,-\alpha_m n)\hat{h}(\xi).
\eeqsn

By Plancharel's Theorem, 
$\{h_{m,n}(t)\}_{m,n\in\Z^d}=\{e^{-i\beta\alpha_m m\cdot n}\Pi(\alpha_m n,\beta m)h(t)\}_{m,n\in\Z^d}$ is a frame in 
$L^2(\R^d)$ if and only if
$\{\hat{h}_{m,n}(\xi)\}_{m,n\in\Z^d}=\{\Pi(\beta m,-\alpha_m n)\hat{h}(\xi)\}_{m,n\in\Z^d}$ is a frame in $L^2(\R^d)$.
It follows, arguing similarly as for $\{g_{m,n}\}_{m,n\in\Z^d}$, that 
$\{h_{m,n}\}_{m,n\in\Z^d}$ is a frame if and only if
\beqs
\label{infalpha}
&&\inf_{t\in\R^d}\sum_{m\in\Z^d}\frac{1}{\alpha_m^d}|\hat{h}_m(\xi)|^2=A>0\\
\label{supalpha}
&&\sup_{t\in\R^d}\sum_{m\in\Z^d}\frac{1}{\alpha_m^d}|\hat{h}_m(\xi)|^2=B<+\infty.
\eeqs
Moreover, the canonical dual frame of $\{h_{m,n}\}_{m,n\in\Z^d}$ is given by (cf.
\cite[Cor.~2]{Balazs})
\beqsn
\tilde{h}_{m,n}(t)=T_{\alpha_m n}\F^{-1}\left(\frac{\hat h_m}{\sum_{\ell\in\Z^d}\frac{1}{\alpha_\ell^d}|\hat h_\ell|^2}\right)(t)
\eeqsn
and
\beqs
\label{45}
\hat{\tilde h}_{m,n}(\xi)=e^{-i\alpha_m n\cdot\xi}
\frac{\hat h_m(\xi)}{\sum_{\ell\in\Z^d}\frac{1}{\alpha_\ell^d}|\hat h_\ell(\xi)|^2}.
\eeqs

We prove now the following stability result on the Gabor $\omega$-wave front set, similarly as in Theorem~\ref{th2} and Corollary~\ref{cor1}:

\begin{Th}
\label{th3}
Let $h\in\Sch_\omega(\R^d)\setminus\{0\}$ with $\supp\hat h\subseteq[A_1,B_1]\times\dots\times [A_d,B_d]$. Set
$h_m(t)=e^{i\beta m\cdot t}h(t)$ and $h_{m,n}(t)=h_m(t-\alpha_m n)$ for $\alpha_m,\beta>0$ satisfying \eqref{alpham}, \eqref{infalpha} and \eqref{supalpha}.

Then, for $u\in\Sch'_\omega(\R^d)$,
\beqsn
\WF'_\omega(u)=\WF^G_\omega(u)=\WF^{\{h_{m,n}\}}_\omega(u).
\eeqsn
\end{Th}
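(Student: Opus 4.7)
The plan is to reduce Theorem~\ref{th3} to Corollary~\ref{cor1} via Plancherel duality: the Fourier transform intertwines the adaptation over frequency of Theorem~\ref{th3} with the adaptation over time already treated in Section~\ref{sec4}.

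First, using $\hat h_{m,n}(\xi)=\Pi(\beta m,-\alpha_m n)\hat h(\xi)=e^{-i\alpha_m n\cdot\xi}\hat h(\xi-\beta m)$, recorded just after \eqref{53}, the system $\{\hat h_{m,n}\}_{m,n\in\Z^d}$, after relabeling $(n',m'):=(m,n)$, takes exactly the form $\tilde g_{m',n'}=\Pi(\tilde\alpha n',\tilde\beta_{n'}m')\tilde g$ considered in Proposition~\ref{lemma2} and Theorem~\ref{th2}, with $\tilde g:=\hat h\in\Sch_\omega(\R^d)$, $\tilde\alpha:=\beta$ and $|\tilde\beta_{n'}|:=\alpha_{n'}$; the sign of $\tilde\beta_{n'}$ is immaterial because the outer sum runs over all $m'\in\Z^d$. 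The compact-support assumption on $\hat h$ together with \eqref{alpham}, \eqref{infalpha}, \eqref{supalpha} translates into the hypotheses \eqref{7}, \eqref{10}, \eqref{11} for this relabeled system, so Corollary~\ref{cor1} is applicable to $\hat u$ with the frame $\{\hat h_{m,n}\}$.

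Next, a direct computation based on the Fourier convention $\hat f(\xi)=\int f(x)e^{-ix\cdot\xi}dx$ and the Plancherel identity $\langle u,h_{m,n}\rangle=(2\pi)^{-d}\langle\hat u,\hat h_{m,n}\rangle$ gives
\beqsn
V_{\hat h}\hat u(\xi,-x)=(2\pi)^d e^{ix\cdot\xi}V_h u(x,\xi),\qquad (x,\xi)\in\R^{2d}.
\eeqsn
Let $J:(x,\xi)\mapsto(\xi,-x)$; this is a linear isomorphism of $\R^{2d}$ preserving the Euclidean norm (hence $\omega(z)=\omega(Jz)$) and the class of open cones. Since $J$ maps the indexing point $(\alpha_m n,\beta m)$ of $h_{m,n}$ to $(\beta m,-\alpha_m n)$, the indexing point of $\hat h_{m,n}$, it follows from Definitions~\ref{y-sigmaWF} and \ref{defWF'} that
\beqsn
z_0\notin\WF^{\{h_{m,n}\}}_\omega(u)\iff Jz_0\notin\WF^{\{\hat h_{m,n}\}}_\omega(\hat u),\qquad z_0\notin\WF'_\omega(u)\iff Jz_0\notin\WF'_\omega(\hat u).
\eeqsn

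Applying Corollary~\ref{cor1} to $\hat u\in\Sch'_\omega(\R^d)$ (which lies in $\Sch'_\omega$ by Fourier invariance of the space) with the frame $\{\hat h_{m,n}\}$ yields $\WF^{\{\hat h_{m,n}\}}_\omega(\hat u)=\WF^G_\omega(\hat u)=\WF'_\omega(\hat u)$; pushing this chain back through $J^{-1}$ and combining with $\WF'_\omega(u)=\WF^G_\omega(u)$ from \cite[Thm.~3.17]{BJO-Gabor} produces the assertion of Theorem~\ref{th3}. The main obstacle is the bookkeeping of the relabeling: one has to verify that negativity of $\tilde\beta_{n'}$ does not affect any of the conclusions of Propositions~\ref{lemma2}--\ref{lemma3} and Theorem~\ref{th2}, which is the case because only $|\tilde\beta_{n'}|^d$ enters the frame operator, the frame bounds, and the canonical dual window; any residual phase factors produced by $J$ and by the Plancherel identity are harmless since Definitions~\ref{y-sigmaWF} and \ref{defWF'} involve only the moduli of the coefficients.
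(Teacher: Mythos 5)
Your proposal is correct, and it takes a genuinely different route from the paper. The paper proves Theorem~\ref{th3} by redoing the two--step argument of Theorem~\ref{th2} ``by hand'': it splits $u=u_1+u_2$ according to whether the index point $(\alpha_m n,\beta m)$ lies in the cone $\Gamma$, shows $\hat u_1\in\Sch_\omega(\R^d)$ by repeating the $u_1$-estimate of Theorem~\ref{th2} on the Fourier side (using \eqref{45} and $V_hu_1(x,\xi)=e^{-ix\cdot\xi}V_{\hat h}\hat u_1(\xi,-x)$), and then bounds $V_hu_2$ on a smaller cone via an explicit change of variables that reduces $\langle\tilde h_{m,n},\Pi(z)h\rangle$ to $V_h$ of a family that is bounded in $\Sch_\omega(\R^d)$ by Proposition~\ref{PropDavid}. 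You instead observe that the rotation $J:(x,\xi)\mapsto(\xi,-x)$ conjugates the whole frequency-adapted problem into the time-adapted one already solved: since $\hat h_{m,n}=\Pi(\beta m,-\alpha_m n)\hat h$, the relabeled system satisfies exactly \eqref{7}, \eqref{10}, \eqref{11} (your sign remark is the right fix: replacing $\tilde\beta_{n'}$ by $|\tilde\beta_{n'}|$ and $m'$ by $-m'$ leaves both the family of functions and the set of index points unchanged, so Propositions~\ref{lemma2}--\ref{lemma3} and Theorem~\ref{th2} apply verbatim), and since $J$ is an isometry of $\R^{2d}$ mapping open cones to open cones with $|V_{\hat h}\hat u(Jz)|=(2\pi)^d|V_hu(z)|$ and $|\langle \hat u,\hat h_{m,n}\rangle|=(2\pi)^d|\langle u,h_{m,n}\rangle|$, both wave front sets transform covariantly under $J$. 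Your reduction is shorter, requires no new estimates, and makes transparent that the frequency-adapted case is the metaplectic image of the time-adapted one; the paper's direct computation is self-contained, yields $\WF'_\omega(u)=\WF^{\{h_{m,n}\}}_\omega(u)$ without passing through $\WF^G_\omega$, and exhibits the explicit form of the dual frame $\tilde h_{m,n}$, but at the cost of essentially repeating the proof of Theorem~\ref{th2}.
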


\begin{proof}
We just have to prove the inclusion
\beqsn
\WF'_\omega(u)\subseteq\WF^{\{h_{m,n}\}}_\omega(u).
\eeqsn
Let $0\neq z_0\notin\WF^{\{h_{m,n}\}}_\omega(u)$. There exists then an open conic set $\Gamma\subset\R^{2d}\setminus\{0\}$ containing $z_0$ such that
\beqsn
\sup_{(\alpha_m n,\beta m)\in\Gamma}e^{\lambda\omega(\alpha_m n,\beta m)}
|\langle u,h_{m,n}\rangle|<+\infty,\qquad
\forall\lambda>0.
\eeqsn
For all $\psi\in\Sch_\omega(\R^d)$, from \eqref{53} we have:
\beqsn
\langle u,\psi\rangle=&&\sum_{m,n\in\Z^d}\langle u,h_{m,n}\rangle
\langle\tilde{h}_{m,n},\psi\rangle\\
=&&\sum_{m,n\in\Z^d}e^{i\beta\alpha_m m\cdot n}\langle u,\Pi(\alpha_m n,\beta m)h\rangle
\langle\tilde{h}_{m,n},\psi\rangle\\
=&&\sum_{m,n\in\Z^d}e^{i\beta\alpha_m m\cdot n}V_hu(\alpha_m n,\beta m)
\langle\tilde{h}_{m,n},\psi\rangle.
\eeqsn

Let us set
\beqs
\label{47}
&&u_1:=\sum_{\afrac{m,n\in\Z^d}{\sigma=(\alpha_m n,\beta m)\in\Gamma}}
e^{i\beta\alpha_m m\cdot n}V_hu(\sigma)
\tilde{h}_{m,n}\\
\label{48}
&&u_2:=\sum_{\afrac{m,n\in\Z^d}{\sigma=(\alpha_m n,\beta m)\notin\Gamma}}
e^{i\beta\alpha_m m\cdot n}V_hu(\sigma)
\tilde{h}_{m,n}.
\eeqs
Clearly $V_hu=V_hu_1+V_hu_2$. Let us first prove that $V_hu_1\in\Sch_\omega(\R^d)$.
To this aim we prove that $V_{\hat h}\hat{u}_1\in\Sch_\omega(\R^d)$, recalling that
(see \cite[(3.10)]{G})
\beqsn
V_hu_1(x,\xi)=e^{-ix\cdot\xi} V_{\hat h}\hat{u}_1(\xi,-x).
\eeqsn

From \eqref{47} and \eqref{45}:
\beqsn
\hat{u}_1(\xi)=&&\sum_{\afrac{m,n\in\Z^d}{\sigma=(\alpha_m n,\beta m)\in\Gamma}}
e^{i\beta\alpha_m m\cdot n}V_hu(\sigma)
\hat{\tilde{h}}_{m,n}(\xi)\\
=&&\sum_{\afrac{m,n\in\Z^d}{\sigma=(\alpha_m n,\beta m)\in\Gamma}}
e^{i\beta\alpha_m m\cdot n}V_hu(\sigma)
e^{-i\alpha_m n\cdot\xi}
\frac{\hat h_m(\xi)}{\sum_{\ell\in\Z^d}\frac{1}{\alpha_\ell^d}|\hat h_\ell(\xi)|^2}.
\eeqsn

We can thus proceed similarly as for the estimate of 
$e^{-\lambda\varphi^*\left(\frac{|\gamma|}{\lambda}\right)}
e^{-\mu\varphi^*\left(\frac{|\delta|}{\mu}\right)}|t^\delta\partial^\gamma u_1(t)|$
in the proof of Theorem~\ref{th2}, to prove that $\hat{u}_1\in\Sch_\omega(\R^d)$.
Then $V_{\hat h}\hat{u}_1\in\Sch_\omega(\R^d)$ by \cite[Thm.~2.7]{GZ} and hence $V_h u_1\in\Sch_\omega(\R^d)$ and
\beqs
\label{50}
\forall\lambda>0\ \exists C_\lambda>0:\quad
e^{\lambda\omega(z)}|V_hu_1(z)|\leq C_\lambda\qquad\forall z\in\R^{2d}.
\eeqs

We now fix an open conic set $\Gamma'\subseteq\R^{2d}\setminus\{0\}$ as in the proof of Theorem~\ref{th2} and estimate $V_hu_2$ in $\Gamma'$.
From \eqref{48} and the subadditivity of $\omega$ we have
\beqs
\nonumber
e^{\lambda\omega(z)}|V_hu_2(z)|\leq && e^{\lambda\omega(z)}
\sum_{\afrac{m,n\in\Z^d}{\sigma=(\alpha_m n,\beta m)\notin\Gamma}}
|V_hu(\sigma)|
|\langle\tilde{h}_{m,n},\Pi(z)h\rangle|\\
\label{51}
\leq&&\sum_{\afrac{m,n\in\Z^d}{\sigma=(\alpha_m n,\beta m)\notin\Gamma}}
e^{\lambda\omega(\sigma)} |V_hu(\sigma)|
e^{\lambda\omega(z-\sigma)}|\langle\tilde{h}_{m,n},\Pi(z)h\rangle|.
\eeqs
For $z=(t,\xi)$:
\beqsn
|\langle\tilde{h}_{m,n},\Pi(z)h\rangle|=&&
\Big|\Big\langle T_{\alpha_m n}\F^{-1}\Big(\frac{\hat h_m}{\sum_{\ell\in\Z^d}\frac{1}{\alpha_\ell^d}|\hat{h}_\ell |^2}\Big)(t),e^{it\cdot\xi}h(t-x)\Big\rangle\Big|\\
=&&\Big|\Big\langle \F^{-1}\Big(\frac{\hat h_m}{\sum_{\ell\in\Z^d}\frac{1}{\alpha_\ell^d}|\hat{h}_\ell |^2}\Big)(t-\alpha_m n),e^{it\cdot\xi}h(t-x)\Big\rangle\Big|.
\eeqsn
By the change of variables $\tau=t-\alpha_m n$:
\beqsn
|\langle\tilde{h}_{m,n},\Pi(z)h\rangle|
=&&\Big|\Big\langle \F^{-1}\Big(\frac{\hat h_m}{\sum_{\ell\in\Z^d}\frac{1}{\alpha_\ell^d}|\hat{h}_\ell |^2}\Big)(\tau),e^{i(\tau+\alpha_m n)\cdot\xi}h(\tau+\alpha_m n-x)\Big\rangle\Big|\\
=&&\Big|\Big\langle \F^{-1}\Big(\frac{\hat h_m}{\sum_{\ell\in\Z^d}\frac{1}{\alpha_\ell^d}|\hat{h}_\ell |^2}\Big)(\tau),e^{i\tau\cdot\xi}h(\tau-(x-\alpha_m n))\Big\rangle\Big|\\
=&&\Big|\Big\langle \F^{-1}\Big(\frac{\hat h(\cdot-\beta m)}{\sum_{\ell\in\Z^d}\frac{1}{\alpha_\ell^d}|\hat{h}_\ell(\cdot) |^2}\Big)(\tau),e^{i\tau\cdot\xi}h(\tau-(x-\alpha_m n))\Big\rangle\Big|\\
=&&\Big|\Big\langle \F^{-1}\Big(\frac{\hat h(\cdot-\beta m)}{\sum_{\ell\in\Z^d}\frac{1}{\alpha_\ell^d}|\hat{h}_\ell(\cdot-\beta m+\beta m) |^2}\Big)(\tau),e^{i\tau\cdot\xi}h(\tau-(x-\alpha_m n))\Big\rangle\Big|\\
=&&\Big|\Big\langle \F^{-1}\Big(\frac{\hat h(\cdot-\beta m)}{\sum_{\ell\in\Z^d}\frac{1}{\alpha_\ell^d}|(T_{-\beta m}\hat{h}_\ell)(\cdot-\beta m) |^2}\Big)(\tau),e^{i\tau\cdot\xi}h(\tau-(x-\alpha_m n))\Big\rangle\Big|.
\eeqsn
Taking then into account that $\F^{-1}(f(\cdot-\beta m))(\tau)=e^{i\beta m\cdot\tau}\F^{-1}(f)(\tau)$
and $T_{-\beta m}\hat h_\ell=T_{-\beta m}\hat h(\cdot-\beta\ell)=\hat h(\cdot-\beta(\ell-m))$:

\beqsn
|\langle\tilde{h}_{m,n},\Pi(z)h\rangle|
=&&\Big|\Big\langle e^{i\beta m\cdot\tau}\F^{-1}\Big(\frac{\hat h}{\sum_{\ell\in\Z^d}\frac{1}{\alpha_\ell^d}|(T_{-\beta m}\hat{h}_\ell) |^2}\Big)(\tau),e^{i\tau\cdot\xi}h(\tau-(x-\alpha_m n))\Big\rangle\Big|\\
=&&\Big|\Big\langle e^{i\beta m\cdot\tau}\F^{-1}\Big(\frac{\hat h}{\sum_{\ell\in\Z^d}\frac{1}{\alpha_\ell^d}|\hat{h}(\cdot-\beta(\ell-m)) |^2}\Big)(\tau),e^{i\tau\cdot\xi}h(\tau-(x-\alpha_m n))\Big\rangle\Big|\\
=&&\Big|\Big\langle e^{i\beta m\cdot\tau}\F^{-1}\Big(\frac{\hat h}{\sum_{\ell\in\Z^d}\frac{1}{\alpha_{\ell+m}^d}|\hat{h}(\cdot-\beta\ell) |^2}\Big)(\tau),e^{i\tau\cdot\xi}h(\tau-(x-\alpha_m n))\Big\rangle\Big|\\
=&&\Big|\Big\langle \F^{-1}\Big(\frac{\hat h}{\sum_{\ell\in\Z^d}\frac{1}{\alpha_{\ell+m}^d}|
\hat{h}_\ell |^2}\Big)(\tau),e^{i\tau(\xi-\beta m)}h(\tau-(x-\alpha_m n))\Big\rangle\Big|\\
=&&\Big|V_h\Big(\F^{-1}\Big(\frac{\hat h}{\sum_{\ell\in\Z^d}\frac{1}{\alpha_{\ell+m}^d}|\hat{h}_\ell |^2}\Big)\Big)(x-\alpha_m n,\xi-\beta m)\Big|.
\eeqsn

We can thus proceed as in the estimate of \eqref{42}, by means of the continuity of the multiplication operator \eqref{52} and of the inverse Fourier transform $\F^{-1}$, to get from \eqref{51} that
\beqsn
\forall\lambda\ \exists C_\lambda>0:\quad
e^{\lambda\omega(z)}|V_hu_2(z)|\leq C_\lambda\quad\forall z\in\Gamma'.
\eeqsn

The above estimate together with \eqref{50} finally proves that $z_0\notin\WF'_\omega(u)$,
and the proof is complete.
\end{proof}

Note that, similarly as in the case of $\{g_{m,n}(t)\}_{m,n\in\Z^d}$, the frame 
$\{h_{m,n}(t)\}_{m,n\in\Z^d}$ is not an $\varepsilon$-perturbation of the Gabor frame
$\{\Pi(\alpha n,\beta m)h(t)\}_{m,n\in\Z^d}$ either.


\vspace{3mm}
{\bf Acknowledgments.}
Boiti was partially supported by the Projects FIRD 2022, FAR 2023, FIRD 2024 (University of Ferrara) and by the Italian Ministry of University and Research, under 
 PRIN 2022 (Scorrimento), project "Anomalies in partial differential equations and applications", code 2022HCLAZ8\_002, CUP J53C24002560006.
Boiti and Oliaro were partially supported  by
the Research Project GNAMPA-INdAM 2024 ``Analisi di Gabor ed
analisi microlocale: connessioni e applicazioni a equazioni a derivate parziali".
Jornet was partially supported by the Project PID2024-162128NB-I00 funded by MICIU /AEI /10.13039/501100011033 / FEDER, UE and by the Project CIAICO/2023/242 of the Conselleria de Innovaci\'on, Universidades, Ciencia y Sociedad Digital of Generalitat Valenciana.


\end{document}